\let\oldsqrt\sqrt
\def\sqrt{\mathpalette\DHLhksqrt}
\def\DHLhksqrt#1#2{%
\setbox0=\hbox{$#1\oldsqrt{#2\,}$}\dimen0=\ht0
\advance\dimen0-0.2\ht0
\setbox2=\hbox{\vrule height\ht0 depth -\dimen0}%
{\box0\lower0.4pt\box2}}
\newcommand{\R}{\mathbb{R}} 
\newcommand{\ov}{\overline}
\renewcommand{\phi}{\varphi}
\renewcommand{\div}{\textnormal{div}}
\newcommand{\cL}{{\mathcal L}}
\newcommand{\cN}{{\mathcal N}}
\newcommand{\cQ}{{\mathcal Q}}
\newcommand{\cX}{{\mathcal X}}
\theoremstyle{definition}
\newtheorem{defi}{Definition}[section]
\newtheorem{remark}[defi]{Remark}
\theoremstyle{plain} 
\newtheorem{thm}[defi]{Theorem}
\newtheorem{lemma}[defi]{Lemma}
\theoremstyle{definition}
\numberwithin{equation}{section}
 \title[non-homogeneous nonlocal Neumann problem]{A supercritical nonlocal Neumann problem involving non-homogeneous fractional Laplacian} 
\author[Remi Yvant Temgoua]{Remi Yvant Temgoua}
\address{}
\email{remi.y.temgoua@aims-senegal.org}
\date{\today\\ 	\textit{Keywords.} Fractional $g$-Laplacian, Fractional Orlicz-Sobolev spaces, Variational principle, Radial solutions, Neumann problem.\\
 \textit{~~~~~~2020 Mathematics Subject Classification:} 35A15, 35B06, 35B09, 35R11, 46E30.} 
\begin{document}

\begin{abstract}
	In this paper, we study the existence of positive non-decreasing radial solutions of a nonlocal non-standard growth problem ruled by the fractional $g$-Laplace operator with exterior Neumann condition. Our argument exploits some properties of fractional Orlicz-Sobolev spaces combined with a variational principle for nonsmooth functionals, which allows to deal with problems lacking compactness. 
\end{abstract}

\maketitle

\section{Introduction and main results}\label{section:introduction}

We consider the following non-homogeneous nonlocal Neumann problem
 
\begin{equation}\label{e2}
     \left\{\begin{aligned}
     (-\Delta_g)^s u+u&=|u|^{p-2}u~~~\text{in}~~B_1 \\
     \cN_gu&=0 \quad\quad\quad~~\text{in}~~\R^N\setminus\overline{B}_1 
     \end{aligned}
     \right.
	\end{equation}
where $p>2$, $s\in(0,1)$, and $B_1$ is the unit ball of $\R^N$ with $N\geq2$. Here, $(-\Delta_g)^s$ is the so-called fractional $g$-Laplacian introduced in \cite{bonder2019fractional} and defined as
\begin{equation}\label{non-homogeneous-fractional-laplacian}
(-\Delta_g)^su(x)=P.V.\int_{\R^N}g\Bigg(\frac{|u(x)-u(y)|}{|x-y|^{s}}\Bigg)\frac{u(x)-u(y)}{|u(x)-u(y)|}\frac{dy}{|x-y|^{N+s}},
\end{equation}
where ''$P.V$'' stands for the Cauchy principle value and $G$ is a Young function such that $g=G'$. Moreover, $\cN_g$ is the nonlocal Neumann condition introduced in \cite{bahrouni2021neumann} defined by 
\begin{equation}
    \cN_gu(x)=\int_{B_1}g\Bigg(\frac{|u(x)-u(y)|}{|x-y|^{s}}\Bigg)\frac{u(x)-u(y)}{|u(x)-u(y)|}\frac{dy}{|x-y|^{N+s}}\quad\quad\text{for all}~~x\in\R^N\setminus\overline{B}_1. 
\end{equation}
In recent years, problems involving non-standard growth non-local operator like \eqref{non-homogeneous-fractional-laplacian} have received a huge attention. This operator is the natural one to consider when studying nonlocal problems with a behavior more general than a power, see for instance \cite{alberico2021fractionall,alberico2021fractional,azroul2020existence,bahrouni2021neumann,bonder2023homogeneous,chaker2022regularity,salort2020eigenvalues}. Observe that in the case when $G(t)=\frac{t^p}{p}, p>1$, the operator \eqref{non-homogeneous-fractional-laplacian} reduces to the fractional $p$-Laplacian. In this case, problem \eqref{e2} was studied recently in \cite{cinti2020nonlocal,cinti2023existence} for the standand fractional Laplacian $(-\Delta)^s$. See also \cite{amundsen2023radial} where the case of mixed operator of the form $-\Delta+(-\Delta)^s$ was considered.

Regarding the existence of solutions to problem \eqref{e2}, very few references can be found in the literature. To the best of our knowledge, the only paper investigating on the existence of solution to \eqref{e2} is \cite{bahrouni2021neumann}. Notice that in this paper, only nonlinearities with subcritical growths were considered. It is therefore natural to ask if solution can be obtained when dealing with nonlinearity of supercritical type. A positive answer to this question was recently given in \cite{kouhestani2021existence} where the corresponding local problem as $s=1$ of \eqref{e2}, namely
\begin{equation}\label{local}
\left\{
\begin{aligned}
-\div\Big(g(|\nabla u|)\frac{\nabla u}{|\nabla u|}\Big)+u&=|u|^{p-2}u~~~\text{in}~B_1\\
\frac{\partial u}{\partial\nu}&=0~~~~~~~~~~~\text{on}~\partial B_1
\end{aligned}
\right.
\end{equation}
was considered. Precisely, the authors in \cite{kouhestani2021existence} showed that the non-homogeneous Neumann problem \eqref{local} possesses at least one radially increasing solution. They also addressed the nonconstancy of solution. The main tool in their approach is the use of new variational principle introduced in \cite{moameni2017variational} which is based on  the critical point theory of Szulkin for non-smooth functionals \cite{szulkin1986minimax} that allows one to deal with supercritical problems variationally by limiting the corresponding functional on a proper convex subset instead of the whole space. Notice that such a convex set (in which compactness is recovered) consists, in general, of functions enjoying certain symmetry and monotonicity properties.

However, to the best of the author's knowledge, there are no results available in the literature dealing with existence issues in the context of nonlocal non-homogeneous Neumann problem like \eqref{e2} in the supercritical regime. The present paper aims to extend the results of \cite{kouhestani2021existence} to the nonlocal framework of \eqref{e2}, giving a much more general equivalence result for nonlocal non-homogeneous Neumann problem. In this context, one has to consider some theory of fractional Orlicz-Sobolev spaces. The main result we prove is the existence of at least one radially non-decreasing solution. This is achieved under some structural assumption on $G$. 

In order to state our main results, we shall first introduce some properties of Young function. Throughout the paper, the Young function
\begin{equation}\label{young}
G(t)=\int_{0}^{t}g(\tau)\ d\tau
\end{equation}
is assumed to satisfy the growing condition
\begin{equation}\label{g}
1<q^-\leq\frac{tg(t)}{G(t)}\leq q^+<\infty~~~\forall t>0,
\end{equation}
where
\begin{equation*}
q^{-}:=\inf_{t>0}\frac{tg(t)}{G(t)} \quad\quad\text{and}\quad\quad q^+:=\sup_{t>0}\frac{tg(t)}{G(t)}.
\end{equation*}
Roughly speaking, condition \eqref{g} tells that $G$ remains between two power functions. In \cite[Theorem 4.1]{kranosel1961convex} (see also \cite[Theorem 3.4.4]{kufner1979function}) it is shown that the upper bound in \eqref{g} is equivalent to the so-called \textit{$\Delta_2$ condition (or doubling condition)}, namely 
\begin{equation*} 
\quad\quad\quad\quad\quad\qquad g(2t)\leq 2^{q^{+}-1}g(t),\quad\quad\quad G(2t)\leq 2^{q^+}G(t)~~~~~~t\geq0.\qquad\qquad\qquad\qquad\qquad~~~(\Delta_2)     
\end{equation*}
We shall also assume the following property on $G$:
\begin{equation}\label{g2}
\text{the function}~~t\mapsto G(\sqrt{t}),~~t\in[0,\infty)~\text{is convex}.
\end{equation}
With these preliminaries, our first result reads as follows.
\begin{thm}\label{first-main-result}
	Let $G$ be the Young function define in \eqref{young}. Assume that \eqref{g2} holds, $p>2$, and $2\leq q^{-}\leq q^{+}<p$. Then problem \eqref{e2} admits at least one positive non-decreasing radial solution. 
\end{thm}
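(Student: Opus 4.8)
The plan is to transpose to the fractional Orlicz--Sobolev setting the scheme of \cite{kouhestani2021existence} for the local problem \eqref{local}, which rests on the variational principle of \cite{moameni2017variational} together with the critical point theory of Szulkin \cite{szulkin1986minimax} for nonsmooth functionals. The idea is that the supercritical term is tamed by working on a convex cone of radial, monotone functions on which compactness is recovered, and then the variational principle is invoked to show that a critical point on the cone is in fact a critical point on the whole space, hence a weak solution.

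\textbf{Step 1 (Functional setting and the convex cone).} First I would introduce the fractional Orlicz--Sobolev space $W$ adapted to the nonlocal Neumann condition of \cite{bahrouni2021neumann}, on which the natural energy decomposes as $E=\Psi-\Phi$ with
\[
\Psi(u)=\iint_{\mathbb{R}^{2N}\setminus(B_1^c)^2}G\!\left(\frac{|u(x)-u(y)|}{|x-y|^{s}}\right)\frac{dx\,dy}{|x-y|^{N}}+\frac12\int_{B_1}u^{2}\,dx
\]
convex and lower semicontinuous, and $\Phi(u)=\frac1p\int_{B_1}|u|^{p}\,dx$ of class $C^{1}$ after truncating the nonlinearity at a large level (the truncation will be inactive for the solution produced, by a uniform a priori bound coming from the mountain--pass level and $q^{+}<p$). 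Since $p$ is supercritical, $\Phi$ is not dominated by $\Psi$ on $W$, so the analysis is confined to the closed convex cone $\mathcal{K}=\{u\in W:\ u\ \text{radial},\ u\ge0,\ u(\cdot)\ \text{non-decreasing in}\ |x|\}$. The decisive preliminary is a radial-type lemma: every $u\in\mathcal{K}$ satisfies $\|u\|_{L^{\infty}(B_1)}=\lim_{r\to1^{-}}u(r)\le C\|u\|_{W}$, so that on $\mathcal{K}$ the supercritical term is finite, $\Phi|_{\mathcal{K}}\in C^{1}$, and, by Helly's selection theorem combined with the embedding, bounded sequences in $\mathcal{K}$ are precompact in $L^{q}(B_1)$ for every finite $q$; this is the compactness missing on the full space.

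\textbf{Step 2 (A critical point on $\mathcal{K}$).} Next I would verify that $E|_{\mathcal{K}}$ has the mountain--pass geometry in Szulkin's sense: $E(0)=0$; because $q^{+}<p$ one gets $E\ge\alpha>0$ on a small sphere of $\mathcal{K}$; and along constant functions $u\equiv t$ the Gagliardo term vanishes, so $E(u)=|B_1|(\tfrac12 t^{2}-\tfrac1p t^{p})\to-\infty$. Together with the Palais--Smale condition for Szulkin functionals on $\mathcal{K}$ (a consequence of Step 1), the nonsmooth mountain--pass theorem provides $u_0\in\mathcal{K}$ with $u_0\not\equiv0$ that is a critical point of $E|_{\mathcal{K}}$; equivalently, $u_0$ minimizes over $\mathcal{K}$ the convex functional $J_{u_0}(w):=\Psi(w)-\langle\Phi'(u_0),w\rangle$.

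\textbf{Step 3 (Pointwise invariance and the main obstacle).} Here the variational principle of \cite{moameni2017variational} enters. Let $\bar u$ be the (unique, by strict convexity/coercivity of $\Psi$) global minimizer of $J_{u_0}$ over $W$, i.e. the weak solution of the auxiliary \emph{linear} problem $(-\Delta_g)^s\bar u+\bar u=|u_0|^{p-2}u_0$ in $B_1$, $\cN_g\bar u=0$ in $\mathbb{R}^{N}\setminus\overline{B}_1$. If $\bar u\in\mathcal{K}$, then comparing $\min_{\mathcal K}J_{u_0}$ with $\min_{W}J_{u_0}$ forces $u_0=\bar u$, whence $u_0$ solves \eqref{e2}; the strong maximum principle for $(-\Delta_g)^s+1$ then upgrades $u_0\ge0$, $u_0\not\equiv0$ to $u_0>0$, and $E(u_0)>0=E(0)$ shows it is nontrivial. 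So everything reduces to proving $\bar u\in\mathcal{K}$: radial symmetry comes from uniqueness and rotational invariance; nonnegativity from the sign of the datum $|u_0|^{p-2}u_0\ge0$ together with a comparison principle (or from $J_{u_0}(\bar u^{+})\le J_{u_0}(\bar u)$ plus uniqueness); the remaining point --- that the radial profile of $\bar u$ is non-decreasing --- is the heart of the matter. Since $u_0\in\mathcal{K}$, the datum $h:=|u_0|^{p-2}u_0$ is itself radial, nonnegative and non-decreasing; writing $\bar u^{\sharp}$ for the radial non-decreasing rearrangement of $\bar u$ (equimeasurable with $\bar u$, superlevel sets the outer annuli), one has $\|\bar u^{\sharp}\|_{L^{2}(B_1)}=\|\bar u\|_{L^{2}(B_1)}$, a Hardy--Littlewood inequality gives $\int_{B_1}h\,\bar u^{\sharp}\ge\int_{B_1}h\,\bar u$ (because $h$ and $\bar u^{\sharp}$ are equi-monotone), and a P\'olya--Szeg\H{o}-type inequality for the nonlocal $G$-modular --- suitably extended to $\mathbb{R}^N$ to respect the Neumann domain of integration, and this is where \eqref{g2} and $q^{-}\ge2$ are used --- shows the Gagliardo term does not increase under $\sharp$; hence $J_{u_0}(\bar u^{\sharp})\le J_{u_0}(\bar u)$, and uniqueness forces $\bar u=\bar u^{\sharp}\in\mathcal{K}$. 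I expect precisely this last P\'olya--Szeg\H{o}-type inequality for the fractional $G$-modular with the Neumann double-integration domain to be the principal technical difficulty; granting it, Steps 1--3 together yield a positive non-decreasing radial solution of \eqref{e2}.
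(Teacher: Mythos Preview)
Your overall architecture matches the paper's: Moameni's variational principle combined with Szulkin's mountain--pass theorem, applied on the cone $K$ of nonnegative radial non-decreasing functions, with the radial $L^\infty$ bound providing the compactness that makes the supercritical term harmless. The paper does not truncate the nonlinearity; it simply works in $V=\cX_{rad}(B_1)\cap L^p(B_1)$ so that $\Phi$ is $C^1$ on $V$, and it uses assumption \eqref{g2} not for any rearrangement step but (i) in the proof of the reduction theorem (to go from the variational inequality to $\bar u=\bar v$, via the convexity of $t\mapsto G(\sqrt t)$) and (ii) in the Palais--Smale argument (through Lamperti's inequality, Lemma~\ref{lm4}).

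The substantive divergence is your Step~3, the monotonicity of the auxiliary solution $\bar v$. You propose a rearrangement argument: pass to the radial non-decreasing rearrangement $\bar v^{\sharp}$, keep the $L^2$ norm, gain in the linear term by Hardy--Littlewood, and invoke a P\'olya--Szeg\H{o}-type inequality for the nonlocal $G$-modular on the Neumann domain $\cQ$. You flag this last inequality as the main open point, and that concern is justified: such an inequality is \emph{not} available in the literature, and there is no reason to expect it. Classical nonlocal P\'olya--Szeg\H{o} results (Almgren--Lieb and their Orlicz extensions) concern the symmetric \emph{decreasing} rearrangement on all of $\R^N$; for the ``increasing'' rearrangement whose superlevel sets are outer annuli, the Gagliardo-type energy typically goes the wrong way already in the local case, and the mixed domain $\cQ=\R^{2N}\setminus(B_1^c)^2$ adds a further obstruction since the exterior values of $\bar v$ (dictated by $\cN_g\bar v=0$) are not controlled by any rearrangement of $\bar v|_{B_1}$. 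So as written, this step is a genuine gap.

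The paper circumvents rearrangement entirely. It proves $\bar v$ is non-decreasing by a direct comparison/test-function argument (Lemma~\ref{p-w-c}): for each radius $r$ one tests the weak formulation with $(v-v(r))^{+}\mathbbm{1}_{\{r_0<|x|\le r\}}$ when the datum satisfies $|\ov u(r)|^{p-2}\ov u(r)\le v(r)$, and with $(v-v(r))^{-}\mathbbm{1}_{\{|x|>r\}}$ in the complementary case, exploiting that the datum $|\ov u|^{p-2}\ov u$ is itself radial and non-decreasing. This yields, for every $r$, either $v\le v(r)$ on $(r_0,r)$ or $v\ge v(r)$ on $(r,1)$, which forces monotonicity. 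This is elementary, uses only the sign structure of the nonlocal form, and avoids any unproved rearrangement inequality; it is the replacement you need for your Step~3.
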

Our second main result deals with the nonconstancy of solution. It reads as follows. 
\begin{thm}\label{second-main-result}
	If $2<q^{-}$, then problem \eqref{e2} admits at least one non-constant non-decreasing radial solution for every $q^{+}<p$.
	
	In the case when $q^{-}=2$, let's introduce 
	\begin{equation*}
	\Lambda:=\inf\Bigg\{2\iint_{\cQ}G\Bigg(\frac{|u(x)-u(y)|}{|x-y|^s}\Bigg)\frac{dxdy}{|x-y|^{N}}: u\in \Sigma\Bigg\}
	\end{equation*}
	where $\Sigma:=\{u\in\cX_{rad}(B_1): u~\text{is non-decreasing with respect to}~r=|x|,~\int_{B_1}u\ dx=0,~\int_{B_1}|u|^2\ dx=1\}$. If $q^{+}<p$ satisfies
	\begin{equation}
	\Big(\frac{p}{2}\Big)^{\frac{q^{+}-2}{p-2}}\Lambda<(p-2),
	\end{equation}
	then problem \eqref{e2} admits at least one non-constant non-decreasing radial solution. 
\end{thm}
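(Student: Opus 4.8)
The plan is to argue by contradiction: suppose the solution $u$ produced by Theorem~\ref{first-main-result} (or the variational scheme behind it) is constant, say $u\equiv c$. Plugging a constant into \eqref{e2} forces $c = |c|^{p-2}c$, hence $c=1$ (recall $u$ is positive), so the constant $1$ is the only possible constant solution and its energy level is a fixed number that I will call $J(1)$. I would then exhibit a non-constant competitor in the convex cone $\Sigma$-type set on which the functional is minimized, and show its energy is strictly below $J(1)$; since the minimizer realizes the infimum, it cannot be the constant. The whole argument therefore reduces to an energy comparison between the constant $1$ and a well-chosen fluctuation around it.

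Concretely, for a function $v\in\Sigma$ (so $\int_{B_1}v=0$, $\int_{B_1}|v|^2=1$, $v$ radial non-decreasing) and a small parameter $\varepsilon>0$, I would test with $u_\varepsilon = 1+\varepsilon t v$ for suitable $t=t(\varepsilon)$ chosen to stay on the natural constraint manifold (the Nehari-type or normalization constraint used in the variational principle of \cite{moameni2017variational}), and expand the energy
\[
J(u_\varepsilon)= \iint_{\cQ}G\!\left(\frac{|u_\varepsilon(x)-u_\varepsilon(y)|}{|x-y|^s}\right)\frac{dxdy}{|x-y|^{N}} + \frac12\int_{B_1}|u_\varepsilon|^2 - \frac1p\int_{B_1}|u_\varepsilon|^p
\]
in powers of $\varepsilon$. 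When $q^->2$ the Gagliardo--Orlicz term is of order $\varepsilon^{q^-}=o(\varepsilon^2)$, while the interplay of the $L^2$ and $L^p$ terms along the constraint produces a genuinely negative $\varepsilon^2$-coefficient proportional to $-(p-2)$; this immediately gives $J(u_\varepsilon)<J(1)$ for $\varepsilon$ small, with no extra smallness condition, which is exactly the first assertion. When $q^-=2$ the fractional term also contributes at order $\varepsilon^2$, with coefficient controlled from above by $\Lambda$ after optimizing over $v\in\Sigma$ and using convexity \eqref{g2} together with the scaling $(\Delta_2)$ to compare $G$ near $0$ with $\tfrac12 G''(0)t^2$-type behavior; balancing the positive elastic term against the negative reaction term and tracking the factor $(p/2)^{(q^+-2)/(p-2)}$ coming from the relocation onto the constraint manifold yields the stated condition $(p/2)^{(q^+-2)/(p-2)}\Lambda<(p-2)$ as precisely the threshold below which the $\varepsilon^2$ coefficient is negative.

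The main obstacle I anticipate is making the $\varepsilon$-expansion rigorous for the \emph{non-homogeneous} fractional operator: unlike the $p$-Laplacian case, $G$ is not a pure power, so the second-order Taylor expansion of $\iint G(|u_\varepsilon(x)-u_\varepsilon(y)|/|x-y|^s)$ around the constant needs careful justification — one must control the remainder uniformly using only the two-sided bound \eqref{g} and the convexity hypothesis \eqref{g2}, and in the borderline case $q^-=2$ one must identify the limiting quadratic form with the functional defining $\Lambda$. A secondary technical point is verifying that the competitor $u_\varepsilon$ genuinely lies in the admissible convex set used by the variational principle (monotonicity in $r$, sign, and the constraint normalization) and that $\Sigma$ is nonempty with $\Lambda<\infty$, which follows by exhibiting an explicit smooth radial profile. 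Once the expansion and the admissibility are in place, the contradiction with minimality closes the proof in both regimes.
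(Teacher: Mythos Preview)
Your perturbation idea around the constant $1$ is correct in spirit, and your second-order expansion parallels the paper's computation, but there is a structural gap: you treat the solution $\bar u$ as a \emph{minimizer} over a constraint set (``since the minimizer realizes the infimum''), whereas the solution produced in Theorem~\ref{first-main-result} is a \emph{mountain pass} critical point of the restricted functional $I_K$. Its energy equals the min-max level
\[
c=\inf_{\gamma\in\Gamma}\sup_{t\in[0,1]}I_K(\gamma(t)),
\]
not an infimum over competitors. Consequently, exhibiting a single point $u_\varepsilon$ with $I_K(u_\varepsilon)<I_K(1)$ is not enough: you must produce an admissible \emph{path} $\gamma\in\Gamma$ (starting at $0$ and ending at some $e$ with $I_K(e)\le 0$) whose \emph{maximum} satisfies $\max_{t}I_K(\gamma(t))<I_K(1)$. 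The ``Nehari-type constraint'' you invoke plays no role in the scheme; the convex set $K$ is a cone, not a manifold, and there is no normalization on which to project.

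The paper carries this out by taking the ray $\gamma_{\tau,r}(t)=r(1+\tau v)t$ with $v$ radial non-decreasing, $r>1$, $\tau$ small, and bounding the Orlicz term along it via Lemma~\ref{lm5}(i): $G(r\tau t\,\cdot)\le r^{q^+}(\tau t)^{q^-}G(\cdot)$ (no Taylor expansion of $G$ is needed, which also disposes of the remainder issue you flagged). This reduces $\max_t I_K(\gamma_{\tau,r}(t))$ to the maximum of an explicit function $\tfrac{t^2}{2}A-\tfrac{t^p}{p}B$, whose value is $(\tfrac12-\tfrac1p)A^{p/(p-2)}B^{-2/(p-2)}$; comparing this with $I_K(1)=(\tfrac12-\tfrac1p)\omega_N$ is exactly where the second-order analysis in $\tau$ enters. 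The additional parameter $r$ is essential, not cosmetic: one must also check $I_K(\gamma_{\tau,r}(1))\le 0$ so that $\gamma_{\tau,r}\in\Gamma$, which forces $r>(p/2)^{1/(p-2)}$ when $q^-=2$, and combining this with the requirement $r^{q^+-2}\Lambda<p-2$ needed for the $\tau^2$-coefficient to be negative is precisely what produces the factor $(p/2)^{(q^+-2)/(p-2)}$ in the hypothesis.
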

We prove the existence result stated in Theorem \ref{first-main-result} by applying a vriational principle introduced in \cite{moameni2017variational} toghether with a variant of Mountain Pass Theorem due to Szulkin \cite{szulkin1986minimax}. \\

The paper is organized as follows. In Section \ref{section:prliminary}, we first introduce some notations. Next, we also introduce the notion of Young function and state some useful inequalities; the section ends with the presentation of a variational principle that allows us to obtain the existence of solution. Section \ref{section:proof-of-main-result} is devoted to the proof of Theorem \ref{first-main-result} while in Section \ref{section:non-constancy}, we prove Theorem \ref{second-main-result}.

\section{Notation and Preliminaries}\label{section:prliminary}
In this section, we gather some preliminary definitions and technical results that will be useful in the forthcoming sections. After introducing some notations, we recall the notion of Young function and present some simple technical inequalities that will be helpful. Next, we introduce the suitable fractional Orlicz-Sobolev spaces where the variational setting is stated. Finally, we also present an abstract existence result.

\subsection{Notations}
Troughout the paper, we use the notation
\begin{equation*}
u^+(x)=\max\{u(x),0\}~~~\text{and}~~~u^-(x)=\max\{-u(x),0\}.
\end{equation*}
Moreover, $\omega_N=|B_1|$ denotes the volume of the unit ball $B_1$ of $\R^N$. Finally, we denote by $\Omega^c:=\R^N\setminus\Omega$ the complementary of any subset $\Omega$ of $\R^N$.

\subsection{Young functions}
An application $G:\R_+\to\R_+$ is called Young function if it has the integral representation
\begin{equation*}
G(t)=\int_{0}^{t}g(\tau)\ d\tau
\end{equation*}
where the right-continuous function $g:\R_+\to\R_+$ satisfies the following properties
\begin{itemize}
	\item [$(i)$] $g(0)=0,~ g(t)>0$ for $t>0$
	
	\item [$(ii)$] $g$ is non-decreasing on $(0,\infty)$
	
	\item [$(iii)$] $\lim\limits_{t\to\infty}g(t)=\infty$.
\end{itemize}
The following convex property will be useful.
\begin{lemma}\label{lm4}\cite[Lemma 2.1]{lamperti1958isometries}
	Let $G$ be a Young function satisfying \eqref{g} and \eqref{g2}. Then for all $a, b\in\R$,
	\begin{equation*}
	\frac{G(|a|)+G(|b|)}{2}\geq G\Big(\Big|\frac{a+b}{2}\Big|\Big)+G\Big(\Big|\frac{a-b}{2}\Big|\Big).
	\end{equation*}
\end{lemma}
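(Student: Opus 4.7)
The plan is to reduce the asserted inequality to two elementary consequences of the convexity of $H(t):=G(\sqrt{t})$ on $[0,\infty)$, combined via the parallelogram identity. Writing $G(|a|)=H(a^{2})$ and $G(|b|)=H(b^{2})$, and likewise for the right-hand side, the desired estimate becomes
\begin{equation*}
\frac{H(a^{2})+H(b^{2})}{2}\;\geq\; H(u)+H(v),\qquad u:=\Bigl(\tfrac{a+b}{2}\Bigr)^{2},\;\; v:=\Bigl(\tfrac{a-b}{2}\Bigr)^{2}.
\end{equation*}
I would first observe the parallelogram identity $u+v=\tfrac{a^{2}+b^{2}}{2}$, which makes both sides comparable through a common midpoint.

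Next I would record two properties of $H$. The convexity of $H$ comes directly from \eqref{g2}, while $H(0)=G(0)=0$ follows from the definition of the Young function $G$ in \eqref{young}. From these two facts I would deduce \emph{superadditivity}: for $x,y\geq 0$, since the chord inequality at $0$ gives $H(\lambda z)\leq \lambda H(z)$ for $\lambda\in[0,1]$, applying this with $z=x+y$ and $\lambda=x/(x+y)$ and $\lambda=y/(x+y)$ and summing yields $H(x)+H(y)\leq H(x+y)$.

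Applying this superadditivity with $x=u$, $y=v$ and then invoking convexity of $H$ at the midpoint of $a^{2}$ and $b^{2}$, I would chain
\begin{equation*}
H(u)+H(v)\;\leq\; H(u+v)\;=\;H\!\Bigl(\tfrac{a^{2}+b^{2}}{2}\Bigr)\;\leq\;\frac{H(a^{2})+H(b^{2})}{2},
\end{equation*}
which is exactly the claim after translating back from $H$ to $G$.

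I do not expect a serious obstacle here: the only delicate point is verifying that both ingredients (superadditivity and midpoint convexity) really come from the single hypothesis \eqref{g2} together with $G(0)=0$. Assumption \eqref{g} plays no role in this lemma beyond ensuring that $G$ is a genuine, finite Young function, so it need not enter the argument explicitly. The same proof in fact yields the statement for arbitrary convex $H:[0,\infty)\to[0,\infty)$ with $H(0)=0$, which is how Lamperti originally formulated it.
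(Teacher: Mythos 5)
Your proof is correct: the reduction to $H(t)=G(\sqrt t)$, the parallelogram identity $u+v=\tfrac{a^2+b^2}{2}$, and the chain combining superadditivity of a convex function vanishing at the origin with midpoint convexity all check out. The paper does not actually supply a proof of this lemma — it simply cites Lamperti — so there is nothing to compare against, but your argument is the standard (and essentially Lamperti's original) one, and you are right that only \eqref{g2} together with $G(0)=0$ is used, with \eqref{g} playing no role here.
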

We also have the following well-known properties on Young functions, see \cite[Lemma 2.1]{bahrouni2021neumann}.
\begin{lemma}\label{lm5}
	Let $G$ be a Young function satisfying \eqref{g} and let $\xi^{-}(t)=\min\{t^{q^-},t^{q^+}\}$, $\xi^{+}(t)=\max\{t^{q^-},t^{q^+}\}$ for all $t\geq0$. Then for all $a,b\geq0$,
	\begin{itemize}
		\item [$(i)$] $\xi^{-}(a)G(b)\leq G(ab)\leq\xi^{+}(a)G(b)$
		
		\item [$(ii)$] $G(a+b)\leq 2^{q^{+}}(G(a)+G(b))$
		
		\item [$(iii)$] $G$ is Lipschitz continuous. 
	\end{itemize} 
\end{lemma}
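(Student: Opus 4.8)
The plan is to derive each of the three assertions directly from the structural condition \eqref{g}, which says that $q^- \le tg(t)/G(t) \le q^+$ for all $t>0$. The cleanest route to $(i)$ is to fix $b>0$ and study the one-variable function $\varphi(a) := G(ab)/G(b)$ for $a>0$. Using $g=G'$ one computes $a\varphi'(a)/\varphi(a) = ab\,g(ab)/G(ab)$, which by \eqref{g} lies in $[q^-,q^+]$; equivalently, $\frac{d}{da}\log\varphi(a)$ is squeezed between $q^-/a$ and $q^+/a$. Integrating this differential inequality from $1$ to $a$ (splitting into the cases $a\ge 1$ and $0<a\le 1$, which swap the roles of $q^-$ and $q^+$ because the sign of $\log a$ flips) gives $\min\{a^{q^-},a^{q^+}\}\le \varphi(a)\le \max\{a^{q^-},a^{q^+}\}$, i.e. $\xi^-(a)G(b)\le G(ab)\le \xi^+(a)G(b)$. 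The boundary cases $a=0$ or $b=0$ are trivial since $G(0)=0$.

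For $(ii)$ I would bootstrap from $(i)$. Assuming without loss of generality $a,b>0$, write $G(a+b)$ using the monotonicity of $G$ (which follows from $g\ge 0$) to bound it by $G(2\max\{a,b\})$; then apply $(i)$ with the scaling factor $2$ to get $G(2\max\{a,b\}) \le \xi^+(2)\,G(\max\{a,b\}) = 2^{q^+}G(\max\{a,b\}) \le 2^{q^+}\bigl(G(a)+G(b)\bigr)$, where the last step again uses monotonicity. (Alternatively one can invoke the $(\Delta_2)$ estimate $G(2t)\le 2^{q^+}G(t)$ recorded in the excerpt, which is exactly this with $t=\max\{a,b\}$.) This handles $(ii)$.

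For $(iii)$, local Lipschitz continuity of $G$ on $[0,R]$ is immediate: since $g$ is non-decreasing, for $0\le s\le t\le R$ one has $G(t)-G(s)=\int_s^t g(\tau)\,d\tau \le g(R)(t-s)$, so $G$ is Lipschitz on $[0,R]$ with constant $g(R)$. I do not expect any of this to be genuinely hard — the lemma is a standard packaging of \eqref{g} — so the only "obstacle" is bookkeeping: being careful about which of $q^-,q^+$ governs the bound according to whether the scaling factor exceeds $1$, and making sure the degenerate cases with a zero argument are dispatched. Since the statement is quoted from \cite[Lemma 2.1]{bahrouni2021neumann}, one could alternatively just cite that reference; the argument above is the self-contained version.
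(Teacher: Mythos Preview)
Your proof is correct. The paper itself does not prove this lemma at all: it simply states the result and refers the reader to \cite[Lemma~2.1]{bahrouni2021neumann}, so there is no ``paper's own proof'' to compare against beyond that citation. Your self-contained argument is the standard one: the logarithmic-derivative computation for $(i)$, the doubling bound via $(i)$ for $(ii)$, and the mean-value estimate for $(iii)$ are exactly how these facts are usually derived from the growth condition~\eqref{g}. One small remark on $(iii)$: as you implicitly note, $G$ cannot be globally Lipschitz on $[0,\infty)$ since $g(t)\to\infty$, so the assertion should be read as local Lipschitz continuity (Lipschitz on every bounded interval), which is precisely what your bound $|G(t)-G(s)|\le g(R)\,|t-s|$ on $[0,R]$ establishes.
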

We conclude this subsection by mentioning some examples of functions $g$ for which the corresponding Young functions satisfy the assumptions of our main results. 
\begin{itemize}
	\item [$(1)$] $g(t)=r|t|^{r-2}t$, for all $t\in\R$ with $2\leq r<p$. 
	
	\item [$(2)$] $g(t)=r|t|^{r-2}t\log(1+|t|)+\frac{|t|^{r-1}t}{1+|t|}$, for all $t\in\R$, with $2\leq r<p$.
	
	\item [$(3)$] $g(t)=r_1|t|^{r_1-2}t+r_2|t|^{r_2-2}t$, for all $t\in\R$ with $2\leq r_1<r_2<p$.
\end{itemize}

\subsection{Fractional Orlicz-Sobolev spaces}
Let $\Omega$ be an open subset of $\R^N$. Given a Young function $G$, we let  $\cX(\Omega)$ be the fractional Orlicz-Sobolev space define as
\begin{equation}\label{orlicz-space}
\cX(\Omega):=\Bigg\{u:\R^N\to\R~\text{measurable}:u|_{\Omega}\in L^2(\Omega)~\text{and}~\iint_{\cQ}G\Bigg(\frac{|u(x)-u(y)|}{|x-y|^s}\Bigg)\frac{dxdy}{|x-y|^{N}}<\infty\Bigg\},
\end{equation}
where $\cQ=\R^{2N}\setminus(\Omega^c)^2$. Notice that $\cX(\Omega)$ is a reflexive Banach space with respect to the norm
\begin{equation*}
\|u\|_{\cX(\Omega)}:=\|u\|_{L^2(\Omega)}+[u]_{s,G,*}
\end{equation*}
where
\begin{equation*}
[u]_{s,G,*}:=\inf\Bigg\{\lambda>0: \iint_{\cQ}G\Bigg(\frac{|u(x)-u(y)|}{\lambda|x-y|^s}\Bigg)\frac{dxdy}{|x-y|^{N}}\leq 1\Bigg\}
\end{equation*}
is the so-called Luxemburg seminorm.
 
The following result can be found in \cite[Lemma 3.1]{bahrouni2020basic} (see also \cite[Lemma 2.1]{fukagai2006positive}). 
\begin{lemma}\label{lm3}
	Let $G$ be a Young function satisfying \eqref{g} and let $\xi^{-}(t)=\min\{t^{q^-},t^{q^+}\}$, $\xi^{+}(t)=\max\{t^{q^-},t^{q^+}\}$ for all $t\geq0$. Then,
	\begin{equation}
	\xi^{-}([u]_{s,G,*})\leq\rho_{s,G,*}(u)\leq\xi^{+}([u]_{s,G,*})~~\text{for all}~u\in\cX(B_1), 
	\end{equation}
	where
	\begin{equation*}
	\rho_{s,G,*}(u)=\iint_{\cQ}G\Bigg(\frac{|u(x)-u(y)|}{|x-y|^s}\Bigg)\frac{dxdy}{|x-y|^{N}}. 
	\end{equation*}
\end{lemma}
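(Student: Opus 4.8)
The plan is to derive both inequalities straight from the scaling estimate in Lemma~\ref{lm5}(i), namely $\xi^-(a)G(b)\le G(ab)\le\xi^+(a)G(b)$, by integrating it pointwise and then letting the dilation parameter run to the Luxemburg seminorm. Write $\lambda:=[u]_{s,G,*}$ and introduce the set $A:=\{\mu>0:\rho_{s,G,*}(u/\mu)\le1\}$, so that $\lambda=\inf A$ by definition of the Luxemburg seminorm. Since $G$ is non-decreasing, the map $\mu\mapsto\rho_{s,G,*}(u/\mu)$ is non-increasing on $(0,\infty)$; hence $A$ is upward closed, and therefore $(\lambda,\infty)\subseteq A\subseteq[\lambda,\infty)$. (Only the first inclusion needs an argument: if $\mu>\lambda=\inf A$ then $\mu$ fails to be a lower bound of $A$, so some $a\in A$ has $a<\mu$, and monotonicity of the modular gives $\rho_{s,G,*}(u/\mu)\le\rho_{s,G,*}(u/a)\le1$.)

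First I would record the pointwise inequality obtained from Lemma~\ref{lm5}(i) with $a=\mu$ and $b=\dfrac{|u(x)-u(y)|}{\mu|x-y|^s}$, so that $ab=\dfrac{|u(x)-u(y)|}{|x-y|^s}$, and integrate it against $\dfrac{dxdy}{|x-y|^{N}}$ over $\cQ$ to obtain, for every $\mu>0$,
\[
\xi^-(\mu)\,\rho_{s,G,*}(u/\mu)\ \le\ \rho_{s,G,*}(u)\ \le\ \xi^+(\mu)\,\rho_{s,G,*}(u/\mu).
\]
For the upper bound I would then choose any $\mu>\lambda$; since $\mu\in A$ we have $\rho_{s,G,*}(u/\mu)\le1$, and the right-hand inequality gives $\rho_{s,G,*}(u)\le\xi^+(\mu)$. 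Letting $\mu\downarrow\lambda$ and using the continuity of $\xi^+$ at $\lambda$ (with $\xi^+(0)=0$ taking care of the case $\lambda=0$) yields $\rho_{s,G,*}(u)\le\xi^+([u]_{s,G,*})$.

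For the lower bound, if $\lambda=0$ there is nothing to prove because $\xi^-(0)=0$. If $\lambda>0$, pick any $\mu\in(0,\lambda)$; then $\mu\notin A$, so $\rho_{s,G,*}(u/\mu)>1$, and the left-hand inequality above gives $\rho_{s,G,*}(u)\ge\xi^-(\mu)\,\rho_{s,G,*}(u/\mu)\ge\xi^-(\mu)$. Letting $\mu\uparrow\lambda$ and using the continuity of $\xi^-$ gives $\rho_{s,G,*}(u)\ge\xi^-([u]_{s,G,*})$, which completes the argument. Note that $u\in\cX(B_1)$ guarantees $\rho_{s,G,*}(u)<\infty$, so no infinite quantities arise.

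I do not expect a genuine obstacle here: this is a routine modular-versus-seminorm estimate and all the real content sits in Lemma~\ref{lm5}(i), which in turn encodes the growth condition \eqref{g}. The only points deserving care are the monotonicity of $\mu\mapsto\rho_{s,G,*}(u/\mu)$, which justifies the inclusion $(\lambda,\infty)\subseteq A$, and the two limit passages $\mu\to\lambda$, which use nothing more than the continuity of $\xi^\pm$; both should be written out rather than taken for granted. (If desired, the same two limits also yield $\rho_{s,G,*}(u/[u]_{s,G,*})=1$ whenever $[u]_{s,G,*}\in(0,\infty)$, the standard normalization identity for the Luxemburg seminorm, though it is not needed for the statement.)
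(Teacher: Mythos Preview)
Your argument is correct and is exactly the standard route to this modular--seminorm comparison: integrate the scaling estimate of Lemma~\ref{lm5}(i) and then pass to the limit $\mu\to[u]_{s,G,*}$ from above and from below. There is nothing to compare against, however, because the paper does not supply a proof of Lemma~\ref{lm3}; it is stated as a known result with references to \cite[Lemma~3.1]{bahrouni2020basic} and \cite[Lemma~2.1]{fukagai2006positive}, and the proof found in those sources proceeds along precisely the lines you wrote.
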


\begin{remark}\label{rmk1}
	As mentioned in the introduction, condition \eqref{g} indicates that the Young function $G$ lies between two power functions. Precisely, we have 
	\begin{equation*}
	c_1t^{q^-}\leq G(t)\leq c_2t^{q^+}~~~\text{
	for all}~t\geq1,
	\end{equation*}
	where $c_1$ and $c_2$ are two positive constants. In particular, the following embeddings can be obtained
	\begin{equation*}
	\cX(\Omega)\hookrightarrow W^{s,q^{-}}_*(\Omega),~~~~~~~~~~~W^{s,q^{+}}_*(\Omega)\hookrightarrow\cX(\Omega)
	\end{equation*}
	where, for every $r>1$, $W^{s,r}_*(\Omega)$ is a fractional Sobolev space defined as 
	\begin{equation*}
	W^{s,r}_*(\Omega):=\Bigg\{u:\R^N\to\R~\text{measurable}: u|_{\Omega}\in L^2(\Omega)~\text{and}~\iint_{\cQ}\frac{|u(x)-u(y)|^r}{|x-y|^{N+rs}}\ dxdy<\infty\Bigg\}.  
	\end{equation*}
\end{remark}

\subsection{Moameni's variational approach}
We present in this subsection a more general variational principle due to Moameni \cite{moameni2017variational} for non-smooth functionals from which our existence results follow.

Let $V$ be a real Banach space and denote by $V^*$ its topological dual. Let $K$ be a non-empty convex and closed subset of $V$. Moreover, let  $\Psi: V\to (-\infty,+\infty]$ be proper, convex, and lower semi-continuous which is G$\hat{\text{a}}$teaux differentiable on $K$. Consider $\Phi\in C^1(V,\R)$ and let $I: V\to(-\infty,\infty]$ be the functional define by 
\begin{equation}\label{functional}
I=\Psi-\Phi.
\end{equation}
Let $I_K=\Psi_K-\Phi$ be the restriction of $I$ on $K$ where
\begin{equation*}
\Psi_K(u)=\left\{
\begin{aligned}
&\Psi(u)~~~\text{if}~~u\in K\\
&\infty~~~~~~~\text{otherwise}.
\end{aligned}
\right.
\end{equation*}
We now recall the following definition of a critical point for lower semi-continuous functions due to Szulkin, see \cite{szulkin1986minimax}.

\begin{defi}\label{def3}
	A point $\ov u\in V$ is a critical point of $I=\Psi-\Phi$ if $u\in Dom(\Psi)$ and it satisfies the inequality
	\begin{equation}\label{critical-point}
	\langle D\Phi(\ov u), \ov u-v\rangle+\Psi(v)-\Psi(\ov u)\geq0~~~\text{for all}~~v\in V.
	\end{equation}
\end{defi}
We also recall the following notion of pointwise invariance condition from \cite{moameni2017variational}.

\begin{defi}\label{point-wise-condition}
	The triple $(\Psi,K,\Phi)$ is said to satisfies the pointwise invariance condition at a point $\ov u\in V$ if there exists a convex  G$\hat{\text{a}}$teaux differentiable function $H: V\to\R$ at a point $\ov v\in K$ suh that
	\begin{equation*}
	D\Psi(\ov v)+DH(\ov v)=D\Phi(\ov u)+DH(\ov u).
	\end{equation*}
\end{defi}
With the above definitions at hand, we can now state the following new variational principle established in \cite{moameni2017variational}, which is the main tool in the proof of Theorem \ref{first-main-result}.
\begin{thm}\label{variational-theorem}
	Let $V$ be a reflexive Banach space and $K$ be convex and weakly closed subset of $V$. Let $\Psi: V\to (-\infty,+\infty]$ be proper, convex and lower semi-continuous which is G$\hat{\text{a}}$teaux differentiable on $K$ and let $\Phi\in C^1(V,\R)$. Assume that the following two assumptions hold.
	\begin{itemize}
		\item [$(i)$] The functional $I_K: V\to(-\infty,+\infty]$ defined by $I_K=\Psi_K-\Phi$ has a critical point $\ov u\in V$ in the sense of Definition \ref{def3}, and;
		
		\item [$(ii)$] The triple $(\Psi,K,\Phi)$ satisfies the pointwise invariance condition at the point $\ov u$.
	\end{itemize}
Then $\ov u\in K$ is a solution of the equation
\begin{equation*}
D\Psi(u)=D\Phi(u).
\end{equation*}
\end{thm}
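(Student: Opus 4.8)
The plan is to turn the two abstract hypotheses into statements about a single convex functional and then conclude purely by convexity. First I would unpack (i): applying Definition~\ref{def3} to $I_K=\Psi_K-\Phi$, the critical point $\ov u$ lies in $\mathrm{Dom}(\Psi_K)\subseteq K$ and satisfies $\langle D\Phi(\ov u),\ov u-v\rangle+\Psi_K(v)-\Psi_K(\ov u)\ge 0$ for all $v\in V$. Since $\Psi_K\equiv+\infty$ outside $K$ the inequality is vacuous there, so it is equivalent to
\[
\langle D\Phi(\ov u),\ov u-v\rangle+\Psi(v)-\Psi(\ov u)\ \ge\ 0\qquad\text{for every }v\in K .
\]

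Next I would feed in (ii). Let $H\colon V\to\R$ be the convex function and $\ov v\in K$ the point provided by the pointwise invariance condition, and set $\ell:=D\Phi(\ov u)+DH(\ov u)=D\Psi(\ov v)+DH(\ov v)\in V^*$. Adding to the previous inequality the subgradient inequality $H(\ov u)-H(v)\le\langle DH(\ov u),\ov u-v\rangle$ (valid because $H$ is convex) and rearranging gives
\[
(\Psi+H)(v)-\langle\ell,v\rangle\ \ge\ (\Psi+H)(\ov u)-\langle\ell,\ov u\rangle\qquad\text{for every }v\in K ,
\]
i.e. $\ov u$ minimizes the convex functional $g:=(\Psi+H)-\langle\ell,\cdot\rangle$ over $K$. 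On the other hand, the identity $D\Psi(\ov v)+DH(\ov v)=\ell$ says precisely that $Dg(\ov v)=0$, and a convex functional whose G\^{a}teaux derivative vanishes at a point attains its global minimum there; hence $\ov v$ is a global minimizer of $g$ on all of $V$. Since $\ov v\in K$, the displayed inequality gives $g(\ov u)\le g(\ov v)=\min_V g$, so $\ov u$ is itself a global minimizer of $g$ over the whole space $V$.

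To finish, $g$ is G\^{a}teaux differentiable at $\ov u$ (because $\Psi$ is G\^{a}teaux differentiable on $K\ni\ov u$, $H$ at $\ov u$, and the linear part everywhere), and a convex function attaining a global minimum at a point of G\^{a}teaux differentiability has zero derivative there. Therefore $0=Dg(\ov u)=D\Psi(\ov u)+DH(\ov u)-\ell=D\Psi(\ov u)-D\Phi(\ov u)$, which, together with $\ov u\in K$ from the first step, is exactly the assertion $D\Psi(u)=D\Phi(u)$ at $u=\ov u$.

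The delicate point is the middle step. The critical-point condition coming from (i) only makes $\ov u$ a minimizer of $g$ \emph{constrained to $K$}, and since $\ov u$ may lie on the boundary of $K$ one cannot differentiate the constrained problem directly. The role of the pointwise invariance condition is precisely to manufacture, through convexity of $\Psi+H$, an unconstrained critical point $\ov v\in K$ of $g$, which then promotes $\ov u$ to a genuine global minimizer over $V$ and legitimizes the final differentiation. A smaller technical matter worth checking is that ``G\^{a}teaux differentiable on $K$'' really supplies an element $D\Psi(u)\in V^*$ representing the directional derivative in \emph{every} direction of $V$, not merely in directions pointing into $K$, which is what the last line requires.
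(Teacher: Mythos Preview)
The paper does not give its own proof of Theorem~\ref{variational-theorem}; it states the result as a citation to Moameni's work \cite{moameni2017variational}. So there is no proof in the paper to compare your proposal against directly. Your argument is nonetheless correct and is essentially the standard proof of this abstract variational principle: you correctly combine the critical-point inequality for $I_K$ with the subgradient inequality for $H$ to see that $\ov u$ minimizes the convex functional $g=\Psi+H-\langle\ell,\cdot\rangle$ over $K$, then use $Dg(\ov v)=0$ and convexity to upgrade this to a global minimum over $V$, and finally differentiate at $\ov u$.

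It is, however, instructive to compare your route with the paper's proof of the \emph{specialized} version, Theorem~\ref{t1}. There the author does not work abstractly with $g$; instead, he tests the critical-point inequality at $v=\ov v$, tests the Euler--Lagrange equation for $\ov v$ at $\phi=\ov v-\ov u$, and subtracts. This produces the quantitative inequality $\tfrac12\|\ov u-\ov v\|_{L^2}^2\le (\text{an expression in }G)$, and the convexity hypothesis on $t\mapsto G(\sqrt t)$ is then used pointwise to show the right-hand side is nonpositive, forcing $\ov u=\ov v$ a.e.\ in $B_1$. In effect the paper proves $\ov u=\ov v$ rather than $Dg(\ov u)=0$, exploiting the strict convexity of the $L^2$ part of $\Psi$ in the concrete problem. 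Your abstract argument is cleaner and more general (it does not need any strict convexity and never asserts $\ov u=\ov v$), while the paper's hands-on computation in Theorem~\ref{t1} yields the extra information that the auxiliary solution $\ov v$ coincides with $\ov u$, which is what transfers the Neumann condition from $\ov v$ to $\ov u$ in that setting.
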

We close this section with the following minimax principal. We start with the following definition. It is due to Szulkin \cite{szulkin1986minimax}.
\begin{defi}
	Let $c\in\R$. We say that $I=\Psi-\Phi$ satisfies the  Palais-Smale compactness condition (PS) at $c$, if every sequence $\{u_n\}$ such that $I(u_n)\to c$ and 
	\begin{equation}
	\langle D\Phi(u_n), u_n-v\rangle+\Psi(v)-\Psi(u_n)\geq-\varepsilon_n\|u_n-v\|_{V}~~~\text{for all}~~v\in V,
	\end{equation}
	where $\varepsilon_n\to0$, possesses a convergent subsequence.
\end{defi}
A variant of Mountain Pass Theorem established in \cite{szulkin1986minimax} reads as follows. Notice that it plays a key role in establishing assertion $(i)$ of Theorem \ref{variational-theorem}.

\begin{thm}(Mountain pass theorem)\label{mountain-pass-theorem}
	Suppose that $I:V\to (-\infty,+\infty]$ is as in \eqref{functional} and satisfies (PS) and suppose moreover that $I$ satisfies mountain pass geometry (MPG)
	\begin{itemize}
		\item [$(i)$] $I(0)=0$
		\item[$(ii)$] there exists $e\in V$ such that $I(e)\leq 0$
		\item[$(iii)$] there exists some $\eta$ such that $0<\eta<\|e\|$ and for every $u\in V$ with $\|u\|=\eta$ one has $I(u)>0$.
	\end{itemize}
	Then $I$ has a critical value $c$ which is defined by
	\begin{equation}\label{critical-value}
	c=\inf_{\gamma\in\Gamma}\sup_{t\in [0,1]}I(\gamma(t)),
	\end{equation}
	where $\Gamma=\{\gamma\in C([0,1],V): \gamma(0)=0,~ \gamma(1)=e\}$.
\end{thm}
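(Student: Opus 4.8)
\textbf{Proof plan for Theorem \ref{mountain-pass-theorem} (the nonsmooth Mountain Pass Theorem).}

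The plan is to run the classical deformation–minimax scheme, adapted to the Szulkin framework where $I=\Psi-\Phi$ with $\Psi$ convex, proper, lower semicontinuous and $\Phi\in C^1(V,\R)$. First I would record the basic consequences of the mountain pass geometry: every path $\gamma\in\Gamma$ must cross the sphere $\{\|u\|=\eta\}$ (since $\gamma(0)=0$ lies inside it and $\gamma(1)=e$ lies outside it, using $0<\eta<\|e\|$ together with continuity), hence by $(iii)$ one has $\sup_{t}I(\gamma(t))\geq\inf_{\|u\|=\eta}I(u)>0$; taking the infimum over $\Gamma$ gives $c\geq\inf_{\|u\|=\eta}I(u)>0$. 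In particular $c$ is a well-defined real number with $c>0=I(0)\geq I(e)$, so the candidate critical value is strictly separated from the values at the endpoints of the admissible paths.

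The heart of the argument is a quantitative deformation lemma for functionals of the form $\Psi-\Phi$. Following Szulkin, for $c\in\R$ and $\eps>0$ one shows: if $I$ satisfies (PS) at $c$ and $c$ is \emph{not} a critical value, then there exist $\bar\eps\in(0,\eps)$ and a continuous map $\theta:[0,1]\times V\to V$ such that $\theta(0,\cdot)=\mathrm{id}$, $\theta(t,u)=u$ whenever $|I(u)-c|\geq\eps$, the function $t\mapsto I(\theta(t,u))$ is non-increasing, and $\theta(1,\cdot)$ pushes the sublevel set $\{I\leq c+\bar\eps\}$ into $\{I\leq c-\bar\eps\}$. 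I would construct $\theta$ by integrating a locally Lipschitz pseudo-gradient-type vector field built from Szulkin's notion of critical point: the failure of \eqref{critical-point} at every point of the relevant strip, combined with (PS), yields a uniform lower bound $\inf\{\text{``slope''}\}\geq\delta>0$ on a neighborhood of $\{c-\eps\leq I\leq c+\eps\}$; the convexity of $\Psi$ is what allows one to take convex combinations of descent directions and still stay compatible with the domain of $\Psi$, so that the flow remains well-defined on $\mathrm{Dom}(\Psi)$. This is the step I expect to be the main obstacle, since the non-smoothness of $\Psi$ prevents a naive gradient flow; one must instead work with difference quotients of $\Psi$ and a measurable/Lipschitz selection, exactly as in \cite{szulkin1986minimax}. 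I would either reproduce this deformation lemma or, more economically, cite it directly from \cite{szulkin1986minimax} and use it as a black box.

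Granting the deformation lemma, the minimax argument is routine. Suppose, for contradiction, that $c$ defined by \eqref{critical-value} is not a critical value of $I$. Apply the deformation lemma with, say, $\eps=c/2>0$ (legitimate because $c>0$ and $I(0)=0$, $I(e)\leq 0$, so the endpoints $0,e$ satisfy $I<c-\eps$ and are therefore fixed by $\theta(1,\cdot)$). By definition of the infimum in \eqref{critical-value}, choose $\gamma\in\Gamma$ with $\sup_{t\in[0,1]}I(\gamma(t))\leq c+\bar\eps$. Then $\tilde\gamma:=\theta(1,\gamma(\cdot))$ is again continuous, still satisfies $\tilde\gamma(0)=\theta(1,0)=0$ and $\tilde\gamma(1)=\theta(1,e)=e$, hence $\tilde\gamma\in\Gamma$; but by the sublevel-pushing property $\sup_{t}I(\tilde\gamma(t))\leq c-\bar\eps<c$, contradicting $c=\inf_{\Gamma}\sup_t I$. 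Therefore $c$ is a critical value, i.e.\ there exists $\bar u\in\mathrm{Dom}(\Psi)$ with $I(\bar u)=c$ satisfying \eqref{critical-point}, which is the assertion of the theorem. Finally I would remark that the hypothesis $I\in C^1$ in the classical statement is replaced here by $\Phi\in C^1$ only, and that every place where one would differentiate $I$ is replaced by the Szulkin inequality \eqref{critical-point}; this substitution is exactly what makes the scheme applicable to our $I=\Psi-\Phi$ with $\Psi$ merely convex l.s.c.
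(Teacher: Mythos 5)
The paper does not prove this theorem at all: it is quoted verbatim from Szulkin \cite{szulkin1986minimax} and used as a black box, so there is no proof in the paper to compare against. Your outline is a faithful reconstruction of Szulkin's argument (geometry gives $c\geq\inf_{\|u\|=\eta}I>0$, then deformation lemma plus the standard minimax contradiction), and your fallback of citing the deformation lemma directly from \cite{szulkin1986minimax} is exactly what the paper itself does. One small caveat: since $I$ is only lower semicontinuous and $\Psi$ nonsmooth, the deformation is not obtained by integrating a pseudo-gradient vector field; Szulkin instead builds $\theta$ from a locally finite covering and moves along convex segments $u\mapsto u+\lambda(v-u)$, exploiting $\Psi(u+\lambda(v-u))\leq\Psi(u)+\lambda(\Psi(v)-\Psi(u))$ — you hint at this correctly, but the ODE-flow phrasing would not survive as written.
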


\section{Proof of Theorem \ref{first-main-result}}\label{section:proof-of-main-result}
In this section, we aim to establish our first main result. To this end, we first prove a simplified version of Theorem \ref{variational-theorem} applicable to problem \eqref{e2}. We then proceed with the proof of our main result by showing that all the assumptions of Theorem \ref{mountain-pass-theorem} are fulfilled.

Before doing that, we first present the variational setting in our context. Let $\cX(B_1)$ be the fractional Orlicz-Sobolev space define in \eqref{orlicz-space} and denote by $\cX_{rad}(B_1)$ the space of radial functions in $\cX(B_1)$. Define
\begin{equation*}
V=\cX_{rad}(B_1)\cap L^p(B_1).
\end{equation*}
Notice that $V$ is a Banach space equipped with the norm
\begin{equation*}
\|\cdot\|_V=\|\cdot\|_{\cX(B_1)}+\|\cdot\|_{L^p(B_1)}.
\end{equation*}
The formal  Euler-Lagrange functional  $I:V\to(-\infty,+\infty]$ of \eqref{e2} is defined by
\begin{equation*}
I(u)=\iint_{\cQ}G\Bigg(\frac{|u(x)-u(y)|}{|x-y|^s}\Bigg)\frac{dxdy}{|x-y|^{N}}+\frac{1}{2}\int_{B_1}|u(x)|^2\ dx-\frac{1}{p}\int_{B_1}|u(x)|^p\ dx. 
\end{equation*}
Next, define $\Psi:V\to\R$ and $\Phi:V\to\R$ by 
\begin{equation*}
\Psi(u)=\iint_{\cQ}G\Bigg(\frac{|u(x)-u(y)|}{|x-y|^s}\Bigg)\frac{dxdy}{|x-y|^{N}}+\frac{1}{2}\int_{B_1}|u(x)|^2\ dx~~\text{and}~~\Phi(u)=\frac{1}{p}\int_{B_1}|u(x)|^p\ dx
\end{equation*}
so that $$I=\Psi-\Phi.$$
Notice that $\Phi\in C^1(V,\R)$ and $\Psi$ is a proper, convex, and lower semi-continuous function which is G$\hat{\text{a}}$teaux differentiable, that is, $\langle D\Psi(u), \phi\rangle$ exists for all $u,\phi\in V$ with
\begin{equation*}
\langle D\Psi(u),\phi\rangle=\iint_{\cQ}g\Bigg(\frac{|u(x)-u(y)|}{|x-y|^s}\Bigg)\frac{u(x)-u(y)}{|u(x)-u(y)|}\frac{\phi(x)-\phi(y)}{|x-y|^s}\frac{dxdy}{|x-y|^{N}}+\int_{B_1}u\phi\ dx. 
\end{equation*}
We now introduce the convex weakly closed subset $K$ of $V$ as
\begin{equation}\label{convex-set}
K:=\{u\in V: u\geq0, u~\text{is radially non-decreasing with respect to}~r=|x|\}.
\end{equation}
Let $I_K:=\Psi_K-\Phi$ be the restriction of $I$ to $K$ where
\begin{equation*}
\Psi_K(u):=\left\{\begin{aligned}
&\Psi(u)~~~\text{if}~~u\in K,\\
&+\infty~~~\text{otherwise}.
\end{aligned}
\right.
\end{equation*}
Here and throughout the rest of the paper, we assume that the Young function $G(t)=\int_{0}^{t}g(\tau)$ satisfies 

\begin{equation}\label{g'}
2\leq q^-=\inf_{t>0}\frac{tg(t)}{G(t)} \leq\frac{tg(t)}{G(t)}\leq q^+=\sup_{t>0}\frac{tg(t)}{G(t)}<p~~~\forall t>0,
\end{equation}
and that 
\begin{equation}\label{g2'}
\text{the function}~t\mapsto G(\sqrt{t})~\text{is convex on}~[0,\infty).
\end{equation}
Inspired by a variational principle in \cite{moameni2018variational}, we prove the following simplified version of Theorem \ref{variational-theorem} applicable to our problem.
\begin{thm}\label{t1}
	Let $V=\cX_{rad}(B_1)\cap L^p(B_1)$, and let $K$ be the convex and weakly closed subset of $V$ defined in \eqref{convex-set}. Suppose the following two assertions hold:
	\begin{itemize}
		\item [$(i)$] The functional $I_K$ has a critical point $\overline{u}\in V$ in the sense of Definition \ref{def3}, and;
		\item [$(ii)$] there exist $\overline{v}\in K$ with $\cN_g\overline{v}=0$ in $\R^N\setminus\overline{B}_1$ such that
		\begin{equation}\label{v-tilde-equation}
		(-\Delta_g)^s\overline{v}+\overline{v}=D\Phi(\overline{u})=|\overline{u}|^{p-2}\overline{u},
		\end{equation}
		in the weak sense namely,
		\begin{align}\label{v-tilde-weak-formulation}
		\iint_{\cQ}g\Bigg(\frac{|\overline{v}(x)-\overline{v}(y)|}{|x-y|^s}\Bigg)\frac{\overline{v}(x)-\overline{v}(y)}{|\overline{v}(x)-\overline{v}(y)|}\frac{\phi(x)-\phi(y)}{|x-y|^{s}}\frac{dxdy}{|x-y|^N}+\int_{B_1}\overline{v}\phi\ dx=\int_{B_1}D\Phi(\overline{u})\phi\ dx,~\forall\phi\in V.
		\end{align}
	\end{itemize}
	Then $\overline{u}\in K$ is a weak solution of the equation
	\begin{equation}\label{u1}
	(-\Delta_g)^su+u=|u|^{p-2}u 
	\end{equation}
	with Neumann condition. 
\end{thm}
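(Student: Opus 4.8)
The statement is a Moameni-type equivalence: under assertions $(i)$ and $(ii)$, the critical point $\overline u$ of the restricted functional $I_K$ is actually a weak solution of the full Euler--Lagrange equation. The strategy is to show that the triple $(\Psi,K,\Phi)$ satisfies the pointwise invariance condition of Definition~\ref{point-wise-condition} at $\overline u$, so that Theorem~\ref{variational-theorem} applies and yields $D\Psi(\overline u)=D\Phi(\overline u)$, which is exactly the weak form of \eqref{u1}; the Neumann condition $\cN_g\overline u=0$ in $\R^N\setminus\overline B_1$ then comes for free from the fact that the bilinear form $\langle D\Psi(u),\phi\rangle$ only ``sees'' pairs in $\cQ$, so testing against $\phi$ supported in the exterior recovers $\cN_g\overline u=0$ in the weak sense (this is the standard nonlocal integration-by-parts identity behind \cite{bahrouni2021neumann}).

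\textbf{Key steps.} First I would set $H\equiv 0$ (or, more robustly, exhibit a convex Gâteaux-differentiable $H$ making the identity work) and observe that the pointwise invariance condition at $\overline u$ reduces to the existence of some $\overline v\in K$ with $D\Psi(\overline v)=D\Phi(\overline u)$; this is precisely what assertion $(ii)$ provides, since \eqref{v-tilde-weak-formulation} is the weak statement of $D\Psi(\overline v)=D\Phi(\overline u)$ and $\overline v\in K$ by hypothesis. Second, invoke Theorem~\ref{variational-theorem}: combining $(i)$ (the critical point of $I_K$ exists) with the pointwise invariance condition just verified gives that $\overline u\in K$ and $D\Psi(\overline u)=D\Phi(\overline u)$ in $V^*$. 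Third, unwind what $D\Psi(\overline u)=D\Phi(\overline u)$ means: for every $\phi\in V$,
\begin{equation*}
\iint_{\cQ}g\Bigg(\frac{|\overline u(x)-\overline u(y)|}{|x-y|^s}\Bigg)\frac{\overline u(x)-\overline u(y)}{|\overline u(x)-\overline u(y)|}\frac{\phi(x)-\phi(y)}{|x-y|^s}\frac{dxdy}{|x-y|^N}+\int_{B_1}\overline u\,\phi\,dx=\int_{B_1}|\overline u|^{p-2}\overline u\,\phi\,dx.
\end{equation*}
Fourth, recognize this as the weak formulation of \eqref{e2}: taking $\phi\in C_c^\infty(B_1)$ yields $(-\Delta_g)^s\overline u+\overline u=|\overline u|^{p-2}\overline u$ in $B_1$ in the distributional sense, while taking general $\phi\in V$ and using the decomposition $\cQ = (B_1\times\R^N)\cup(\R^N\times B_1)$ up to the diagonal contribution, together with the symmetry of the kernel, isolates the boundary term $\int_{\R^N\setminus\overline B_1}\cN_g\overline u(x)\,\phi(x)\,dx$, which must therefore vanish for all admissible $\phi$; hence $\cN_g\overline u=0$ in $\R^N\setminus\overline B_1$.

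\textbf{Main obstacle.} The delicate point is the verification of the pointwise invariance condition, i.e.\ producing $\overline v$ (here handed to us in $(ii)$) together with the argument that $\overline u=\overline v$, which is what actually upgrades ``$\overline u$ is a critical point of the restricted problem'' to ``$\overline u$ solves the unrestricted equation.'' Concretely one must show that the critical point inequality \eqref{critical-point} for $\overline u$, namely $\langle D\Phi(\overline u),\overline u-w\rangle+\Psi(w)-\Psi(\overline u)\ge 0$ for all $w\in V$, combined with $D\Psi(\overline v)=D\Phi(\overline u)$ and the convexity of $\Psi$, forces $\overline v=\overline u$: testing with $w=\overline v$ gives $\langle D\Psi(\overline v),\overline u-\overline v\rangle\ge \Psi(\overline u)-\Psi(\overline v)$, and convexity of $\Psi$ gives the reverse inequality $\langle D\Psi(\overline v),\overline u-\overline v\rangle\le \Psi(\overline u)-\Psi(\overline v)$, so equality holds in the convexity estimate; strict convexity of $\Psi$ (which follows from Lemma~\ref{lm4}, the uniform convexity furnished by \eqref{g2'}, together with the strictly convex $L^2$ term) then yields $\overline u=\overline v$, and since $\overline v$ solves \eqref{v-tilde-equation} with the Neumann condition, so does $\overline u$. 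Everything else is bookkeeping with the bilinear form and the kernel symmetry; the one place where the structural hypotheses \eqref{g'}--\eqref{g2'} are genuinely used is precisely in securing the strict convexity needed for this identification step.
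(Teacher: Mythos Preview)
Your proposal is correct and follows the same route as the paper: test the critical-point inequality at $w=\overline v$, combine with $D\Psi(\overline v)=D\Phi(\overline u)$ from assertion~$(ii)$, and use convexity of $\Psi$ to force $\overline u=\overline v$, so that $\overline u$ inherits both the equation and the Neumann condition from $\overline v$. The only difference is presentational: rather than invoking Theorem~\ref{variational-theorem} or appealing to abstract strict convexity of $\Psi$, the paper carries out the estimate by hand, using the convexity of $t\mapsto G(\sqrt t)$ (hypothesis~\eqref{g2'}) to show that the convexity defect dominates $\tfrac12\int_{B_1}|\overline u-\overline v|^2\,dx$, which yields $\overline u=\overline v$ a.e.\ in $B_1$ directly.
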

\begin{proof}
	Since by assumption $(i)$ $\overline{u}$ is a critical point of $I_K$, then by Definition \ref{def3}, we have
	\begin{align*}
	&\iint_{\cQ}G\Bigg(\frac{|\phi(x)-\phi(y)|}{|x-y|^s}\Bigg)\frac{dxdy}{|x-y|^{N}}-\iint_{\cQ}G\Bigg(\frac{|\overline{u}(x)-\overline{u}(y)|}{|x-y|^s}\Bigg)\frac{dxdy}{|x-y|^{N}}\\
	&~~~+\frac{1}{2}\int_{B_1}|\phi(x)|^2\ dx-\frac{1}{2}\int_{B_1}|\overline{u}(x)|^2\ dx\geq\int_{B_1}|\overline{u}|^{p-2}\overline{u}(\phi-\overline{u})\ dx, \quad\quad\forall\phi\in K.
	\end{align*}
	By taking in particular $\phi=\overline{v}$, the above inequality becomes
	\begin{align}\label{l1}
	\nonumber&\iint_{\cQ}G\Bigg(\frac{|\overline{v}(x)-\overline{v}(y)|}{|x-y|^s}\Bigg)\frac{dxdy}{|x-y|^{N}}-\iint_{\cQ}G\Bigg(\frac{|\overline{u}(x)-\overline{u}(y)|}{|x-y|^s}\Bigg)\frac{dxdy}{|x-y|^{N}}\\
	&~~~~~~~+\frac{1}{2}\int_{B_1}|\overline{v}(x)|^2\ dx-\frac{1}{2}\int_{B_1}|\overline{u}(x)|^2\ dx\geq\int_{B_1}|\overline{u}|^{p-2}\overline{u}(\overline{v}-\overline{u})\ dx.
	\end{align}
	We now use $\phi=\overline{v}-\overline{u}$ as a test function in \eqref{v-tilde-weak-formulation} to get
	\begin{align}\label{l2}
	\nonumber\iint_{\cQ}g\Bigg(\frac{|\overline{v}(x)-\overline{v}(y)|}{|x-y|^s}\Bigg)\frac{\overline{v}(x)-\overline{v}(y)}{|\overline{v}(x)-\overline{v}(y)|}\frac{(\overline{v}-\overline{u})(x)-(\overline{v}-\overline{u})(y)}{|x-y|^{s}}&\frac{dxdy}{|x-y|^N}+\int_{B_1}\overline{v}(\overline{v}-\overline{u})\ dx\\
	&=\int_{B_1}|\overline{u}|^{p-2}\overline{u}(\overline{v}-\overline{u})\ dx. 
	\end{align}
	Plugging \eqref{l2} into \eqref{l1}, we get
	\begin{align}\label{l3}
	\nonumber\frac{1}{2}\int_{B_1}|\overline{u}-\overline{v}|^2\ dx&\leq\iint_{\cQ}G\Bigg(\frac{|\overline{v}(x)-\overline{v}(y)|}{|x-y|^s}\Bigg)\frac{dxdy}{|x-y|^{N}}-\iint_{\cQ}G\Bigg(\frac{|\overline{u}(x)-\overline{u}(y)|}{|x-y|^s}\Bigg)\frac{dxdy}{|x-y|^{N}}\\
	&-\iint_{\cQ}g\Bigg(\frac{|\overline{v}(x)-\overline{v}(y)|}{|x-y|^s}\Bigg)\frac{\overline{v}(x)-\overline{v}(y)}{|\overline{v}(x)-\overline{v}(y)|}\frac{(\overline{v}-\overline{u})(x)-(\overline{v}-\overline{u})(y)}{|x-y|^{s}}\frac{dxdy}{|x-y|^N}.
	\end{align}
	Since $t\mapsto G(\sqrt{t}) $ is convex, then 
	\begin{equation}\label{property-of-covex-functions}
	G(\sqrt{t_2})-G(\sqrt{t_1})\geq \frac{1}{2\sqrt{t_1}}g(\sqrt{t_1})(t_2-t_1)~~~~\text{for all}~~t_1, t_2\geq0.
	\end{equation}
	By substituting $t_1=\Big(\frac{\overline{v}(x)-\overline{v}(y)}{|x-y|^s}\Big)^2$ and $t_2=\Big(\frac{\overline{u}(x)-\overline{u}(y)}{|x-y|^s}\Big)^2$ in \eqref{property-of-covex-functions}, we have 
	\begin{align}\label{z0}
\nonumber	G\Bigg(\frac{|\overline{u}(x)-\overline{u}(y)|}{|x-y|^s}\Bigg)&-G\Bigg(\frac{|\overline{v}(x)-\overline{v}(y)|}{|x-y|^s}\Bigg)\\
\nonumber	&\geq\frac{1}{2} g\Bigg(\frac{|\overline{v}(x)-\overline{v}(y)|}{|x-y|^s}\Bigg)\frac{|x-y|^s}{|\ov v(x)-\ov v(y)|}\Bigg[\Big(\frac{\overline{u}(x)-\overline{u}(y)}{|x-y|^s}\Big)^2-\Big(\frac{\overline{v}(x)-\overline{v}(y)}{|x-y|^s}\Big)^2\Bigg]\\
	&= \frac{1}{2} g\Bigg(\frac{|\overline{v}(x)-\overline{v}(y)|}{|x-y|^s}\Bigg)\frac{|x-y|^s}{|\ov v(x)-\ov v(y)|} A_{\ov u, \ov v}(x,y),
	\end{align}
	where 
	\begin{align}\label{z1}
\nonumber	A_{\ov u, \ov v}(x,y)&=\Big(\frac{\overline{u}(x)-\overline{u}(y)}{|x-y|^s}\Big)^2-\Big(\frac{\overline{v}(x)-\overline{v}(y)}{|x-y|^s}\Big)^2\\
\nonumber	&=\frac{(\ov v(x)-\ov v(x))(\ov u(x)-\ov u(y)-(\ov v(x)-\ov v(y)))}{|x-y|^{2s}}\\
	&~~~~~~~~~~+\frac{(\ov u(x)-\ov u(x))(\ov u(x)-\ov u(y)-(\ov v(x)-\ov v(y)))}{|x-y|^{2s}}.
	\end{align}
Utilizing the elementary Young inequality $ab\leq\frac{1}{2}a^2+\frac{1}{2}b^2$, the second term on the right-hand side of the above equation can be bound from below as
\begin{align*}
\frac{(\ov u(x)-\ov u(x))(\ov u(x)-\ov u(y)-(\ov v(x)-\ov v(y)))}{|x-y|^{2s}}&=\frac{(\ov u(x)-\ov u(x))^2-(\ov u(x)-\ov u(y))(\ov v(x)-\ov v(y))}{|x-y|^{2s}}\\
&\geq\frac{1}{2}\frac{(\ov u(x)-\ov u(y))^2}{|x-y|^{2s}}-\frac{1}{2}\frac{(\ov v(x)-\ov v(y))^2}{|x-y|^{2s}}\\
&=\frac{1}{2}A_{\ov u, \ov v}(x,y).
\end{align*}
From this, it follows from \eqref{z1} that
	\begin{align*}
	A_{\ov u, \ov v}(x,y)\geq2\frac{(\ov v(x)-\ov v(x))(\ov u(x)-\ov u(y)-(\ov v(x)-\ov v(y)))}{|x-y|^{2s}}.
\end{align*}
Plugging this into \eqref{z0}, there holds that
\begin{align}\label{z3}
\nonumber G\Bigg(\frac{|\overline{u}(x)-\overline{u}(y)|}{|x-y|^s}\Bigg)&-G\Bigg(\frac{|\overline{v}(x)-\overline{v}(y)|}{|x-y|^s}\Bigg)\\
&\geq g\Bigg(\frac{|\overline{v}(x)-\overline{v}(y)|}{|x-y|^s}\Bigg)\frac{\ov v(x)-\ov v(y)}{|\ov v(x)-\ov v(y)|}\frac{\ov u(x)-\ov u(y)-(\ov v(x)-\ov v(y))}{|x-y|^{s}}.
\end{align}
Thus, multiplying \eqref{z3} by $\frac{1}{|x-y|^{N}}$ and integrating over $\cQ$, one gets  
	\begin{align*}
	&\iint_{\cQ}G\Bigg(\frac{|\overline{u}(x)-\overline{u}(y)|}{|x-y|^s}\Bigg)\frac{dxdy}{|x-y|^{N}}-\iint_{\cQ}G\Bigg(\frac{|\overline{v}(x)-\overline{v}(y)|}{|x-y|^s}\Bigg)\frac{dxdy}{|x-y|^{N}}\\
	&~~~~~~\geq\iint_{\cQ} g\Bigg(\frac{|\overline{v}(x)-\overline{v}(y)|}{|x-y|^s}\Bigg)\frac{\ov v(x)-\ov v(y)}{|\ov v(x)-\ov v(y)|}\frac{\ov u(x)-\ov u(y)-(\ov v(x)-\ov v(y))}{|x-y|^{s}}\frac{dxdy}{|x-y|^{N}}, 
	\end{align*}
	 from which we deduce that the right-hand side of \eqref{l3} is nonpositive and thus 
	\begin{align*}
	\int_{B_1}|\overline{u}-\overline{v}|^2\ dx=0.
	\end{align*}
	In particular, $\overline{v}-\overline{u}=0$ i.e., $\overline{v}=\overline{u}$ a.e., in $B_1$. We then deduce from \eqref{v-tilde-equation} that $\overline{u}$ satisfies in the weak sense equation \eqref{u1}. Moreover, since $\overline{v}$ satisfies Neumann conditions, so does $\overline{u}$. The proof is therefore finished. 
\end{proof}
Remark that assertion $(ii)$ in Theorem \ref{t1} tells us that the triple $(\Psi,K,\Phi)$ satisfies the pointwise invariance condition of Theorem \ref{variational-theorem} at $\ov u$ given in Definition \ref{point-wise-condition}. Here, $H=0$ is the corresponding convex G$\hat{\text{a}}$teaux diffentiable function. 

We now wish to verify that all the assumptions of the Mountain Pass Theorem (see Theorem \ref{mountain-pass-theorem}) are satisfy. Before, let us first establish the following lemma which is of key importance.
\begin{lemma}\label{equivalence-of-norms}
	There exists a positive constant $C>0$ such that
	\begin{equation*}\label{lm2}
	\|u\|_{\cX(B_1)}\leq\|u\|_V\leq C\|u\|_{\cX(B_1)},~~~\forall u\in K.
	\end{equation*}
\end{lemma}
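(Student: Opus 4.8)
The statement to prove is Lemma~\ref{equivalence-of-norms}: there exists $C > 0$ such that $\|u\|_{\cX(B_1)} \le \|u\|_V \le C\|u\|_{\cX(B_1)}$ for all $u \in K$.

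The left inequality is trivial since $\|u\|_V = \|u\|_{\cX(B_1)} + \|u\|_{L^p(B_1)} \ge \|u\|_{\cX(B_1)}$. So the content is the right inequality, which amounts to showing that on the convex cone $K$ of nonnegative radially non-decreasing functions, the $L^p$-norm is controlled by the $\cX(B_1)$-norm. In other words, I need a Sobolev-type embedding $\cX(B_1) \cap (\text{radial, non-decreasing}) \hookrightarrow L^p(B_1)$ that works even in the supercritical regime $p$ large — this is exactly the point where the monotonicity constraint buys compactness/embedding beyond the usual Sobolev exponent. The key mechanism is a radial-lemma / pointwise bound: a non-decreasing radial function on $B_1$ attains its maximum at $r = 1$, and more usefully, for $u \in K$ and $x$ with $|x| = r$, the value $u(x)$ can be bounded in terms of the Gagliardo-type energy of $u$ over the annular region near the boundary.

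Here is the plan. Fix $u \in K$. Since $u$ is radial and non-decreasing in $r = |x|$, for any $0 < r < \rho < 1$ we have $u(x) \ge u(y)$ whenever $|x| = \rho$, $|y| = r$; hence $|u(x) - u(y)| = u(x) - u(y) \ge u(\rho) - u(r)$ when $u(\rho) \ge u(r)$, where I abuse notation writing $u(r)$ for the value on the sphere of radius $r$. The idea is to estimate $u(r_0)$ for a fixed $r_0 \in (1/2, 1)$ from below-energy: integrate the inequality $u(x) - u(y) \ge u(\rho) - u(r) \ge 0$ (for $|x| = \rho \ge r_0$, $|y| = r \le 1/2$) against the kernel. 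Concretely, for $x$ in the annulus $A_+ = \{r_0 < |x| < 1\}$ and $y$ in the ball $A_- = \{|y| < 1/2\}$ we have $|x - y| \le 2$, so $|x-y|^{-(N)} \ge 2^{-N}$ and $|x-y|^{-s} \ge 2^{-s}$, and then using that $G$ is increasing together with Lemma~\ref{lm5}(i) (the bounds $\xi^-(a)G(b) \le G(ab) \le \xi^+(a) G(b)$) one gets
\begin{equation*}
\rho_{s,G,*}(u) \ge \iint_{A_+ \times A_-} G\Bigg(\frac{u(x) - u(y)}{|x-y|^s}\Bigg)\frac{dx\,dy}{|x-y|^N} \ge c_N\, G\big(c_N'(u(r_0) - u(1/2))\big)
\end{equation*}
or, more carefully, after restricting $x$ to where $u(x)$ is large, a bound of the form $\rho_{s,G,*}(u) \ge c\, G(c\,(M - m))$ where $M = \esssup_{B_1} u$ is essentially $\lim_{r\to 1} u(r)$ and $m = u(1/2)$. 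Meanwhile $\|u\|_{L^2(B_1)} \ge \|u\|_{L^2(B_{1/2})} \ge c_N m$ (since $u \ge m$ a.e.\ outside $B_{1/2}$... actually $u \ge u(1/2) = m$ a.e.\ on $B_1 \setminus B_{1/2}$, and $u \ge 0$ everywhere), so $m \lesssim \|u\|_{L^2}$. Combining, $M \lesssim m + G^{-1}(c\,\rho_{s,G,*}(u)) \lesssim \|u\|_{L^2} + G^{-1}(c\,\rho_{s,G,*}(u))$. Then since $u \le M$ a.e., $\|u\|_{L^p(B_1)} \le \omega_N^{1/p} M$, and using Lemma~\ref{lm3} to relate $\rho_{s,G,*}(u)$ to $[u]_{s,G,*}$ (so $G^{-1}$ of the modular is controlled by a power of the seminorm via $\xi^\pm$), one obtains $\|u\|_{L^p(B_1)} \le C(\|u\|_{L^2(B_1)} + [u]_{s,G,*} + 1) \le C'(1 + \|u\|_{\cX(B_1)})$.

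The last estimate has an additive constant, which is not quite homogeneous; to get the stated clean form one uses a scaling/normalization argument — it suffices to prove $\|u\|_{L^p(B_1)} \le C\|u\|_{\cX(B_1)}$ for $\|u\|_{\cX(B_1)} = 1$ (or $\ge 1$) and handle small norms separately, noting $K$ is a cone so $u/\|u\|_{\cX(B_1)} \in K$; alternatively, because both sides are dealing with the same $u$ and the borderline case $\|u\|_{\cX}$ small forces $u$ small in $L^p$ by the same pointwise bound with $G$ near $0$ behaving like $t^{q^+}$, the constant can be absorbed. I expect the main obstacle to be exactly this: making the pointwise radial bound quantitatively correct across both the regime where $G$ behaves like $t^{q^-}$ (large arguments) and $t^{q^+}$ (small arguments), i.e.\ carefully invoking Lemma~\ref{lm5}(i) and Lemma~\ref{lm3} with the right choice of $\xi^-$ vs $\xi^+$, and dealing with the fact that $u(r_0)-u(1/2)$ rather than $M$ directly appears — one should choose a sequence of radii $r_0 < r_1 < \cdots < 1$ or integrate in $\rho$ over $(r_0,1)$ to capture the full supremum $M$. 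The measure-theoretic care (working with representatives of the monotone radial function, essential sup vs pointwise limit) is routine but must be stated.
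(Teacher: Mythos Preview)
Your approach is far more elaborate than the paper's, and the complications you flag yourself (lack of homogeneity, the need to let $r_0\to 1$ or iterate over radii to capture $M$) are genuine obstacles that your sketch does not actually resolve: the lower bound $\rho_{s,G,*}(u)\gtrsim |A_+|\,|A_-|\,G(c(u(r_0)-u(1/2)))$ degenerates as $r_0\to 1$, and integrating in $\rho$ gives at best an $L^q$-type control of $u(\rho)-u(1/4)$, not an $L^\infty$ bound.

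The paper sidesteps all of this. It first proves a separate \emph{radial lemma} (Lemma~\ref{radial-lemma}): for any nonnegative, radial, non-decreasing $u\in L^2(B_1)$ one has
\[
\|u\|_{L^{\infty}(B_1)} \le C\|u\|_{L^2(B_1)}.
\]
The argument is two lines: extend $u$ trivially, integrate $|u|^2$ over the annulus $B_{r+1/2}\setminus B_r$, and use monotonicity to bound the integrand below by $|u(r)|^2$, giving $|u(r)|^2\le C_N^{-1}\|u\|_{L^2(B_1)}^2$ for every $r\in(0,1)$. No Gagliardo energy is used at all. Once this is in hand the equivalence of norms is immediate and homogeneous:
\[
\|u\|_{L^p(B_1)} \le \omega_N^{1/p}\|u\|_{L^{\infty}(B_1)} \le C\|u\|_{L^2(B_1)} \le C\|u\|_{\cX(B_1)},
\]
so $\|u\|_V\le (1+C)\|u\|_{\cX(B_1)}$ with no normalization step.

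The key idea you are missing is that monotonicity together with the $L^2$ norm \emph{alone} already controls the pointwise values of $u$; bringing the seminorm $[u]_{s,G,*}$ into the estimate is precisely what introduces the non-homogeneity (since $G$ is not a pure power) and the vanishing-annulus problem. Drop the nonlocal energy from your argument entirely and work only with $\|u\|_{L^2(B_1)}$.
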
 
The principal ingredient in the proof of the above lemma is the following radial lemma. 
\begin{lemma}\label{radial-lemma}
	Let $u\in L^2(B_1)$ be a nonnegative radial non-decreasing function. Then there exists a positive constant $C>0$ such that
	\begin{equation*}
	\|u\|_{L^{\infty}(B_1)}\leq C\|u\|_{L^2(B_1)}.
	\end{equation*}
\end{lemma}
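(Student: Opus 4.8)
The plan is to exploit the radial monotonicity of $u$ to localize its essential supremum at $\partial B_1$ and then to bound that supremum by the $L^2$--mass which monotonicity forces $u$ to carry on annuli adjacent to the boundary.

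\emph{Step 1 (reduction to the boundary value).} Since $u$ is radial, write $u(x)=\tilde u(|x|)$ with $\tilde u\colon[0,1)\to[0,\infty)$ non-decreasing; then $u$ attains its essential supremum in the limit $|x|\to1$, that is, $\|u\|_{L^\infty(B_1)}=\sup_{r\in[0,1)}\tilde u(r)=\lim_{r\uparrow1}\tilde u(r)=:M\in[0,+\infty]$. It therefore suffices to produce a constant $C>0$ with $M\le C\|u\|_{L^2(B_1)}$.

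\emph{Step 2 (comparison on annuli).} Fix $r\in(0,1)$. By monotonicity, $u(y)\ge\tilde u(r)$ for a.e.\ $y$ in the annulus $A_r:=\{y : r\le|y|<1\}$, whence
\[
\|u\|_{L^2(B_1)}^2\ \ge\ \int_{A_r}|u(y)|^2\,dy\ \ge\ |A_r|\,\tilde u(r)^2\ =\ \omega_N\,(1-r^N)\,\tilde u(r)^2 .
\]
This gives the pointwise estimate $\tilde u(r)\le\bigl(\omega_N(1-r^N)\bigr)^{-1/2}\|u\|_{L^2(B_1)}$ for all $r\in(0,1)$; in particular, taking any fixed $r_\ast\in(0,1)$ with $1-r_\ast^N\ge\tfrac12$ (e.g.\ $r_\ast=2^{-1/N}$) yields $\|u\|_{L^\infty(B_{r_\ast})}=\tilde u(r_\ast)\le\sqrt{2/\omega_N}\,\|u\|_{L^2(B_1)}$, which already controls $u$ on every ball compactly contained in $B_1$.

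\emph{Step 3 (the boundary layer).} What remains — and what I expect to be the main obstacle — is to upgrade Step~2, whose constant $\bigl(\omega_N(1-r^N)\bigr)^{-1/2}$ degenerates as $r\uparrow1$, into the uniform bound $M\le C\|u\|_{L^2(B_1)}$; equivalently, one must exclude that the values of $u$ close to $M$ are confined to an arbitrarily thin shell $\{r_0<|x|<1\}$. I would treat this by contradiction: assuming $M$ large (or $M=+\infty$) and $\{\tilde u>M/2\}\subset\{r_0<|x|<1\}$ with $r_0$ close to $1$, one tracks how abruptly the non-decreasing profile $\tilde u$ must rise across the shell $\{r_\ast<|x|<1\}$ and feeds this back into the radial identity $\|u\|_{L^2(B_1)}^2=N\omega_N\int_0^1\tilde u(r)^2r^{N-1}\,dr$, using the monotonicity of $\tilde u$ again, together with the additional structural information carried by the functions to which the lemma is applied — membership in $\cX(B_1)$, hence finiteness of the Gagliardo energy $\rho_{s,G,*}(u)$ for $u\in K$, and the embedding $\cX(B_1)\hookrightarrow W^{s,q^-}_*(B_1)$ of Remark~\ref{rmk1}. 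Making this ``no thin concentration for monotone radial functions'' step quantitative is the crux of the argument.
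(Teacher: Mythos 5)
Your Steps 1 and 2 are correct, and Step 2 is in fact the sharpest estimate the stated hypotheses allow: for a nonnegative radial non-decreasing $u\in L^2(B_1)$ one gets $\tilde u(r)\le\bigl(\omega_N(1-r^N)\bigr)^{-1/2}\|u\|_{L^2(B_1)}$, with a constant that necessarily degenerates as $r\uparrow1$. The gap you flag in Step 3 is genuine and, more importantly, cannot be closed under the hypotheses as written: the lemma is false in this generality. Take $u=M\,\mathbbm{1}_{\{1-\eps<|x|<1\}}$, which is radial, nonnegative and non-decreasing in $r$; then $\|u\|_{L^\infty(B_1)}=M$ while $\|u\|_{L^2(B_1)}^2=M^2\omega_N\bigl(1-(1-\eps)^N\bigr)\to0$ as $\eps\to0$, so no uniform constant $C$ can exist (and an $L^2$ radial non-decreasing function need not even be bounded). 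Your instinct that one must bring in the extra structure of the functions to which the lemma is actually applied --- finiteness of the Gagliardo-type energy $\rho_{s,G,*}(u)$ for $u\in K\subset\cX(B_1)$ --- is the right one: a correct version of this radial bound must let the right-hand side contain that seminorm (this is how such $L^\infty$ estimates are obtained in the references on supercritical radial Neumann problems), and without it your ``no thin concentration'' step cannot be made quantitative.

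For comparison, the paper's own proof integrates $|u|^2$ over the annulus $B_{r+\frac12}\setminus B_r$ after extending $u$ by zero outside $B_1$, and then asserts $|u(t)|\ge|u(r)|$ for $t\in(r,r+\frac12)$ ``by monotonicity''. This is precisely the step your more careful Step 2 declines to take: once $r>\frac12$ the interval $(r,r+\frac12)$ leaves $B_1$, where the extension vanishes, and the monotonicity inequality fails; with any nonzero extension the opening identity $\|u\|_{L^2(B_1)}=\|u\|_{L^2(\R^N)}$ fails instead. So the paper's argument only establishes $\|u\|_{L^\infty(B_{1/2})}\le C\|u\|_{L^2(B_1)}$, which is your Step 2 with $r_*=\frac12$. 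In short, your proposal is incomplete, as you acknowledge, but the missing step is not an oversight on your part: the lemma itself needs a stronger hypothesis (or a weaker conclusion) before any proof can succeed, and the paper's proof does not supply the missing ingredient either.
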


\begin{proof}
	Let $0<r<1$ and denote $B_r$ the ball centered at the origin with radius $r$. We now identify $u$ with its trivial extension. Then,
	\begin{align*}
	\|u\|^2_{L^2(B_1)}=\|u\|^2_{L^2(\R^N)}&=\int_{\R^N}|u(x)|^2\ dx\geq\int_{B_{r+\frac{1}{2}}\setminus B_r}|u(x)|^2\ dx\\
	&=\gamma_{N-1}\int_{r}^{r+\frac{1}{2}}|u(t)|^2t^{N-1}\ dt\\
	&\geq\gamma_{N-1}|u(r)|^2\int_{r}^{r+\frac{1}{2}}t^{N-1}\ dt\\
	&=\frac{\gamma_{N-1}}{N}\Big((r+\frac{1}{2})^N-r^N\Big)|u(r)|^2,
	\end{align*}
	where in the inequality above, we have used the monotonicity of $u$. Here, $\gamma_{N-1}$ denotes the surface measure of the sphere $\mathbb{S}^{N-1}$. Since $(r+\frac{1}{2})^N-r^N\geq2^{-N}$, the proof is completed. 
\end{proof}
We now give the proof of Lemma \ref{equivalence-of-norms}.

\begin{proof}[Proof of Lemma \ref{equivalence-of-norms}]
	By definition of $\|\cdot\|_V$, the first inequality follows. Now, for all $u\in K$,
	\begin{align*}
	\|u\|_V&=\|u\|_{\cX(B_1)}+\|u\|_{L^p(B_1)}\\
	&\leq\|u\|_{\cX(B_1)}+\omega_N^{\frac{1}{p}}\|u\|_{L^{\infty}(B_1)}\\
	&\leq\|u\|_{\cX(B_1)}+c\|u\|_{L^2(B_1)}\\
	&\leq C\|u\|_{\cX(B_1)},
	\end{align*} 
	where in the second inequality, we have used Lemma \ref{radial-lemma}. 
\end{proof}
In the next two lemmas, we prove that the functional $I_K$ satisfies the Palais-Small condition and has the geometric features needed to apply the Mountain Pass theorem. 
\begin{lemma}\label{mpg}
	If $2\leq q^-\leq q^+<p$, then the functional $I_K$ satisfies the Mountain Pass Geometry (MPG).
\end{lemma}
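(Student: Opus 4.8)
The plan is to verify the three conditions $(i)$–$(iii)$ of Theorem \ref{mountain-pass-theorem} for $I_K = \Psi_K - \Phi$ directly from the definitions. Condition $(i)$, namely $I_K(0)=0$, is immediate since $0\in K$ and both $\Psi(0)$ and $\Phi(0)$ vanish. The real work is in $(ii)$ and $(iii)$, where the key tool is Lemma \ref{lm3} (modular vs. Luxemburg seminorm estimates through $\xi^{\pm}$), together with the equivalence of norms on $K$ provided by Lemma \ref{equivalence-of-norms}, which lets us work entirely with $\|\cdot\|_{\cX(B_1)}$ rather than $\|\cdot\|_V$.

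For condition $(iii)$ (the mountain pass ``ring''), I would estimate $I_K(u)$ from below for $u\in K$ with $\|u\|_{\cX(B_1)}$ small. Using Lemma \ref{lm3} one has $\Psi(u) = \rho_{s,G,*}(u) + \tfrac12\|u\|_{L^2(B_1)}^2 \geq \xi^-([u]_{s,G,*}) + \tfrac12\|u\|_{L^2(B_1)}^2$, and for a small seminorm $\xi^-(t) = t^{q^+}$; combined with $\|u\|_{L^2}^2$ this controls a suitable power of $\|u\|_{\cX(B_1)}$ from below (here $2\le q^-\le q^+$ keeps all exponents $\ge 2$). For the subtracted term, since $p>2 \ge q^+ \ge q^-$, the embedding of $\cX(B_1)$ (equivalently $W^{s,q^+}_*(B_1)$, via Remark \ref{rmk1}) into $L^p(B_1)$ — or, more elementarily, the radial bound of Lemma \ref{radial-lemma} giving $\|u\|_{L^p}\le C\|u\|_{L^2}$ on $K$ — yields $\Phi(u) = \tfrac1p\|u\|_{L^p(B_1)}^p \leq C\|u\|_{\cX(B_1)}^p$ with $p$ strictly larger than the leading lower-order exponent. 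Hence $I_K(u) \geq c_1\|u\|_{\cX(B_1)}^{\alpha} - c_2\|u\|_{\cX(B_1)}^{p}$ for some $\alpha\in[2,q^+]$ with $\alpha<p$, which is strictly positive on a sphere $\|u\|_{\cX(B_1)}=\eta$ for $\eta>0$ small enough, and the corresponding condition in the $\|\cdot\|_V$ norm then follows from Lemma \ref{equivalence-of-norms}.

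For condition $(ii)$, I would fix any nonzero $w\in K$ (for instance a positive constant function, which lies in $K$ and has finite seminorm) and examine $t\mapsto I_K(tw)$ for $t>0$ large. Using Lemma \ref{lm3} and the $\Delta_2$-type bounds of Lemma \ref{lm5}$(i)$, one gets $\Psi(tw) \leq \xi^+([tw]_{s,G,*}) + \tfrac{t^2}{2}\|w\|_{L^2}^2 \leq C(t^{q^+}+t^2)$ for $t\ge 1$, while $\Phi(tw) = \tfrac{t^p}{p}\|w\|_{L^p}^p$ grows like $t^p$ with $p>q^+\ge 2$; therefore $I_K(tw)\to -\infty$ as $t\to\infty$, so some $e=t_0w$ with $t_0$ large satisfies $I_K(e)\le 0$, and one may further enlarge $t_0$ so that $\|e\|_{V}>\eta$. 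The main obstacle is bookkeeping the exponents: one must make sure that in the lower bound the genuinely dominant small-scale power (which is $q^+$, or $2$, whichever is reached first) is strictly below $p$, and in the upper bound that the genuinely dominant large-scale power among $\{q^+,2\}$ is strictly below $p$ — both of which are exactly guaranteed by the hypothesis $2\le q^-\le q^+<p$. Once the exponent comparisons are in place, the three MPG conditions drop out of the displayed inequalities with no further difficulty.
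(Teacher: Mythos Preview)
Your plan is correct and matches the paper's proof almost exactly: $(i)$ is immediate, $(ii)$ is obtained by taking $e$ to be a large positive constant (so $I_K(c)=\omega_N(c^2/2-c^p/p)\to-\infty$), and $(iii)$ combines Lemma~\ref{lm3} (giving $\rho_{s,G,*}(u)\ge[u]_{s,G,*}^{q^+}$ for small seminorm) with the norm equivalence of Lemma~\ref{equivalence-of-norms} to get $I_K(u)\ge C_1\eta^{q^+}-C_2\eta^p>0$; the paper also notes that $I_K(u)=+\infty>0$ trivially for $u\in V\setminus K$. One caveat: the first option you list for bounding $\|u\|_{L^p}$ in $(iii)$ --- a Sobolev-type embedding $\cX(B_1)\hookrightarrow L^p(B_1)$ via Remark~\ref{rmk1} --- is not available here because $p$ may be supercritical (this is precisely the regime the paper targets), so your second option through Lemma~\ref{radial-lemma}, which is what the paper actually uses, is essential rather than merely ``more elementary''.
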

\begin{proof}
	It suffices to show that $I_K$ fulfills conditions $(i), (ii)$, and $(iii)$ of Theorem \ref{mountain-pass-theorem}.  Clearly, $I_K$ satisfies condition $(i)$ since $I_K(0)=0$. Now, for $c\in\R_+$,
	\begin{align*}
	I_K(c)&=\iint_{\cQ}G\Bigg(\frac{|c-c)|}{|x-y|^s}\Bigg)\frac{dxdy}{|x-y|^{N}}+\frac{1}{2}\int_{B_1}c^2\ dx-\frac{1}{p}\int_{B_1}c^p\ dx\\
	&=\omega_N\Big(\frac{c^2}{2}-\frac{c^p}{p}\Big),
	\end{align*}
	where $\omega_N=|B_1|$. Since $2<p$, then $I_K(c)\leq0$ for $c$ sufficiently large. So, condition $(ii)$ follows with $e=c$ for some $0<c\in\R$ sufficiently large. For condition $(iii)$ we first notice that for $u\in V\setminus K$, $I_K(u)>0$ by definition. Finally, let $u\in K$ with $\|u\|_V=\eta$ for some $0<\eta<1$ sufficiently small. Then, by Lemma \ref{lm3}, 
	\begin{align*}
	I_K(u)&=\rho_{s,G,*}(u)+\frac{1}{2}\|u\|^2_{L^2(B_1)}-\frac{1}{p}\|u\|^p_{L^p(B_1)}\\
	&\geq \xi^{-}([u]_{s,G,*})+\frac{1}{2}\|u\|^2_{L^2(B_1)}-\frac{1}{p}\|u\|^p_{L^p(B_1)}\\
	&=[u]_{s,G,*}^{q^+}+\frac{1}{2}\|u\|^2_{L^2(B_1)}-\frac{1}{p}\|u\|^p_{L^p(B_1)}.
	\end{align*}
	Since $2\leq q^+$,
	\begin{align*}
	I_K(u)&\geq\frac{1}{2}\Big([u]_{s,G,*}^{q^+}+\|u\|^{q^+}_{L^2(B_1)}\Big)-\frac{1}{p}\|u\|^p_{L^p(B_1)}\\
	&\geq\frac{1}{2^{q^+}}\|u\|^{q^+}_{\cX(B_1)}-\frac{1}{p}\|u\|^p_{L^p(B_1)}\\
	&\geq\frac{C}{2^{q^+}}\|u\|^{q^+}_{V}-\frac{1}{p}\|u\|^p_{V},
	\end{align*}
	where in the latter, we have used Lemma \ref{lm2}. Thus,
	\begin{equation}\label{l}
	I_K(u)\geq C_1\eta^{q^+}-C_2\eta^p,
	\end{equation}
	where $C_1$ and $C_2$ are two positive constants. Since $q^+<p$, then for $0<\eta<1$ sufficiently small, the right-hand side of \eqref{l} is strictly positive and thus $I_K(u)>0$. 
\end{proof}

\begin{lemma}\label{ps}
	If $2\leq q^-\leq q^+<p$, the the functional $I_K$ satisfies the Palais-Small compactness condition (PS).
\end{lemma}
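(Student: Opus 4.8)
The plan is to verify the Palais--Smale condition for $I_K$ following the standard scheme: first show that any (PS) sequence is bounded in $V$, then extract a weakly convergent subsequence and upgrade it to strong convergence. Let $\{u_n\}\subset V$ be a sequence with $I_K(u_n)\to c$ and $\langle D\Phi(u_n),u_n-v\rangle+\Psi_K(v)-\Psi_K(u_n)\geq-\eps_n\|u_n-v\|_V$ for all $v\in V$, where $\eps_n\to0$. Since $\Psi_K(v)=+\infty$ for $v\notin K$, the inequality forces $u_n\in K$ for all $n$, and we may test with $v\in K$ only. Choosing $v=2u_n\in K$ and $v=\tfrac12 u_n\in K$ and combining with the energy bound $I_K(u_n)\leq c+1$ gives, after using the convexity estimate $\langle D\Psi(u_n),u_n\rangle\leq q^+\Psi(u_n)$ coming from \eqref{g'} (more precisely from $tg(t)\leq q^+G(t)$ together with the $L^2$ term contributing a factor $2\leq q^+$), the inequality
\begin{equation*}
c+1+\eps_n\|u_n\|_V\geq I_K(u_n)-\tfrac1{q^+}\langle D\Phi(u_n),u_n\rangle\geq\Big(1-\tfrac{q^+}{p}\Big)\Phi(u_n)\cdot q^+\cdot\tfrac1p\,,
\end{equation*}
so that $\Phi(u_n)=\tfrac1p\|u_n\|_{L^p(B_1)}^p$ is controlled and hence, feeding this back, $\Psi(u_n)$ is bounded; by Lemma \ref{lm3} this bounds $[u_n]_{s,G,*}$ and $\|u_n\|_{L^2(B_1)}$, and by Lemma \ref{equivalence-of-norms} it bounds $\|u_n\|_V$. (The clean way is to write $I_K(u_n)-\tfrac1p\langle D\Phi(u_n),u_n\rangle\geq(1-\tfrac{q^+}{p})\Psi(u_n)$ directly, using $q^+<p$.)

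Next, since $V$ is reflexive and $K$ is weakly closed and convex, up to a subsequence $u_n\rightharpoonup \ov u$ in $V$ with $\ov u\in K$. The key point is that $K$ consists of nonnegative radial non-decreasing functions, so by Lemma \ref{radial-lemma} the bound on $\|u_n\|_{L^2(B_1)}$ gives a uniform $L^\infty(B_1)$ bound; combined with the monotonicity this yields, via Helly's selection theorem (or the compact embedding of radial monotone bounded functions), strong convergence $u_n\to\ov u$ in $L^p(B_1)$ for every $p$. Consequently $\Phi(u_n)\to\Phi(\ov u)$ and $\langle D\Phi(u_n),u_n-\ov u\rangle\to0$. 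Now put $v=\ov u$ in the (PS) inequality:
\begin{equation*}
\langle D\Phi(u_n),u_n-\ov u\rangle+\Psi(\ov u)-\Psi(u_n)\geq-\eps_n\|u_n-\ov u\|_V,
\end{equation*}
whence $\limsup_n\Psi(u_n)\leq\Psi(\ov u)$. Combined with weak lower semicontinuity of $\Psi$ (which holds since $\Psi$ is convex and lower semicontinuous) we get $\Psi(u_n)\to\Psi(\ov u)$, i.e. convergence of the modular plus the $L^2$-norm.

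Finally I would convert this convergence of the convex functional into norm convergence. Using the uniform convexity of $\cX(B_1)$ (equivalently Lemma \ref{lm4}, the Clarkson-type inequality $\tfrac{G(|a|)+G(|b|)}2\geq G(|\tfrac{a+b}2|)+G(|\tfrac{a-b}2|)$): apply it pointwise to $a=\tfrac{u_n(x)-u_n(y)}{|x-y|^s}$, $b=\tfrac{\ov u(x)-\ov u(y)}{|x-y|^s}$, integrate against $|x-y|^{-N}$ over $\cQ$, and add the parallelogram identity for the $L^2$ parts, to obtain
\begin{equation*}
\tfrac12\Psi(u_n)+\tfrac12\Psi(\ov u)\geq\Psi\Big(\tfrac{u_n+\ov u}{2}\Big)+\rho_{s,G,*}\Big(\tfrac{u_n-\ov u}{2}\Big)+\tfrac12\Big\|\tfrac{u_n-\ov u}{2}\Big\|_{L^2(B_1)}^2.
\end{equation*}
Since $\tfrac{u_n+\ov u}2\rightharpoonup\ov u$, lower semicontinuity gives $\liminf_n\Psi(\tfrac{u_n+\ov u}2)\geq\Psi(\ov u)$, so the left side minus the first right-hand term tends to $0$, forcing $\rho_{s,G,*}(\tfrac{u_n-\ov u}{2})\to0$ and $\|u_n-\ov u\|_{L^2(B_1)}\to0$; by Lemma \ref{lm3} the former yields $[u_n-\ov u]_{s,G,*}\to0$, and with $L^p$ convergence already in hand we conclude $\|u_n-\ov u\|_V\to0$. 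I expect the boundedness step — getting the exponents to line up so that $q^+<p$ actually produces a coercive-type estimate despite $I_K$ being the difference of the modular, the $L^2$ term, and the $L^p$ term — and the passage from modular convergence to seminorm convergence (needing Lemma \ref{lm4} rather than naive uniform convexity of the norm) to be the two places requiring care; the radial-compactness step is where the restriction to $K$ is essential and is otherwise routine given Lemma \ref{radial-lemma}.
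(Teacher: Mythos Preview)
Your compactness step (from boundedness to strong convergence) is essentially identical to the paper's: both test the (PS) inequality with $v=\ov u$ to get $\limsup_n\Psi(u_n)\leq\Psi(\ov u)$, combine with weak lower semicontinuity to obtain convergence of the modular, and then use the Clarkson-type inequality of Lemma~\ref{lm4} to pass to seminorm convergence. Your direct argument is marginally cleaner than the paper's contradiction argument, but the content is the same, and the appeal to Lemma~\ref{radial-lemma} for $L^\infty$ control (hence $L^p$ convergence) is exactly what the paper does.

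The boundedness step, however, has a genuine gap. In the Szulkin framework the (PS) inequality only delivers $\Psi_K(v)-\Psi_K(u_n)$, not $\langle D\Psi(u_n),\cdot\rangle$; your invocation of ``$\langle D\Psi(u_n),u_n\rangle\leq q^+\Psi(u_n)$'' presupposes that this quantity has already been extracted, which it has not. Concretely, testing with $v=2u_n$ yields $\|u_n\|_{L^p}^p\leq\Psi(2u_n)-\Psi(u_n)+\eps_n\|u_n\|_V$, and by Lemma~\ref{lm5}(i) the right-hand side is at most $(2^{q^+}-1)\rho_{s,G,*}(u_n)+\tfrac32\|u_n\|_{L^2}^2+\eps_n\|u_n\|_V$; feeding this back into $I_K(u_n)\leq c+1$ gives a useful coercivity estimate only when $p>2^{q^+}-1$, which is strictly stronger than the hypothesis $p>q^+$ (e.g.\ fails for $q^+=3$, $p=4$). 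The paper avoids this by testing with $v=tu_n$ for $t$ in the interval $(1,(p/q^+)^{1/(q^+-1)})$, chosen precisely so that $t^{q^+}-1<p(t-1)$; this is what makes the coefficients in (3.18) positive. Alternatively, since $\Psi$ is G\^ateaux differentiable on $K$, you could let $t\to1^\pm$ in the (PS) inequality with $v=tu_n$ to obtain $|\langle D\Psi(u_n),u_n\rangle-\langle D\Phi(u_n),u_n\rangle|\leq\eps_n\|u_n\|_V$, after which your ``clean way'' does go through; but this limiting step is exactly what is missing from your write-up, and the fixed choices $t=2,\tfrac12$ do not substitute for it.
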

\begin{proof}
	Let $\{u_n\}\subset K$ be a sequence such that $I_K(u_n)\to c\in \R$ and
	\begin{equation}\label{a1}
	\langle D\Phi(u_n), u_n-v\rangle+\Psi(v)-\Psi(u_n)\geq-\varepsilon_n\|u_n-v\|_{V},~~~\text{for all}~~v\in V.
	\end{equation}
	The aim is to show that $\{u_n\}$ possesses a convergent subsequence. For that, since $I_K(u_n)\to c$, then for $n$ sufficiently large,
	\begin{equation}\label{a}
	\iint_{\cQ}G\Bigg(\frac{|u_n(x)-u_n(y)|}{|x-y|^s}\Bigg)\frac{dxdy}{|x-y|^{N}}+\frac{1}{2}\int_{B_1}|u_n(x)|^2\ dx-\frac{1}{p}\int_{B_1}|u_n(x)|^p\ dx\leq c+1.
	\end{equation}
	Before going further, we introduce the function $f(t)=t^{q^+}-p(t-1)-1$ for $t\in(1,\infty)$. Then there exists $t_0:=\Big(\frac{p}{q^+}\Big)^{\frac{1}{q^{+}-1}}$ such that for all $t\in(1,t_0)$, $f(t)<0$  i.e., $t^{q^+}-1<p(t-1)$. In the rest of the proof we take $t\in(1,t_0)$. Notice that for such $t$, there also holds that $t^2-1\leq t^{q^{+}}-1$ since $2\leq q^+$.
	
	Now, substituting $v=tu_n$ in \eqref{a1} and recalling that $\langle D\Phi(u_n),u_n\rangle=\int_{B_1}|u_n(x)|^p\ dx$, then
	\begin{align}\label{a2}
	\nonumber&\iint_{\cQ}G\Bigg(\frac{|u_n(x)-u_n(y)|}{|x-y|^s}\Bigg)\frac{dxdy}{|x-y|^{N}}-\iint_{\cQ}G\Bigg(t\frac{|u_n(x)-u_n(y)|}{|x-y|^s}\Bigg)\frac{dxdy}{|x-y|^{N}}\\
	&+(t-1)\int_{B_1}|u_n(x)|^p\ dx+\frac{1-t^2}{2}\int_{B_1}|u_n(x)|^2\ dx\leq\varepsilon_n(t-1)\|u_n\|_V.
	\end{align}
	Since $t>1$ then from Lemma \ref{lm5}-(i), we have
	\begin{equation}\label{a3}
	\iint_{\cQ}G\Bigg(t\frac{|u_n(x)-u_n(y)|}{|x-y|^s}\Bigg)\frac{dxdy}{|x-y|^{N}}\leq t^{q^+}\iint_{\cQ}G\Bigg(\frac{|u_n(x)-u_n(y)|}{|x-y|^s}\Bigg)\frac{dxdy}{|x-y|^{N}}.
	\end{equation}
	So, \eqref{a2} and \eqref{a3} yield
	\begin{align}\label{a4}
	\nonumber(1-t^{q^+})\iint_{\cQ}G\Bigg(\frac{|u_n(x)-u_n(y)|}{|x-y|^s}\Bigg)\frac{dxdy}{|x-y|^{N}}&+(t-1)\int_{B_1}|u_n(x)|^p\ dx+\frac{1-t^2}{2}\int_{B_1}|u_n(x)|^2\ dx\\
	&~~~~~~~~~~~~~~~~~\leq\varepsilon_n(t-1)\|u_n\|_V\leq C\|u_n\|_V.
	\end{align}
	Next, since $t^{q^+}-1<p(t-1)$, then take $\zeta>0$ such that $t^{q^+}-1<\zeta<p(t-1)$. Multiplying \eqref{a} by $\zeta$ and summing it up with \eqref{a4}, we get
	\begin{align*}
	&(\zeta+1-t^{q^+})\iint_{\cQ}G\Bigg(\frac{|u_n(x)-u_n(y)|}{|x-y|^s}\Bigg)\frac{dxdy}{|x-y|^{N}}+\Big(t-1-\frac{\zeta}{p}\Big)\int_{B_1}|u_n(x)|^p\ dx\\
	&~~~~~~~~~~+\frac{\zeta+1-t^2}{2}\int_{B_1}|u_n(x)|^2\ dx\leq C_0+C\|u_n\|_V,
	\end{align*}
	that is
	\begin{align}\label{a5}
	 \nonumber C_1\iint_{\cQ}G\Bigg(\frac{|u_n(x)-u_n(y)|}{|x-y|^s}\Bigg)\frac{dxdy}{|x-y|^{N}}+C_2\int_{B_1}|u_n(x)|^p\ dx&+C_3\int_{B_1}|u_n(x)|^2\ dx\\
	 &\leq C_0+C\|u_n\|_V,
	\end{align}
	where $C_1=(\zeta+1-t^{q^+}), C_2=t-1-\frac{\zeta}{p}$, and $C_3=\frac{\zeta+1-t^2}{2}$ are positive constants, thanks to the choice of $\zeta$. 
	
	Our next aim is to show that $\{u_n\}$ is bounded in $V$. We argue by contradiction. Assume that $\|u_n\|_V\to+\infty$. Then, by Lemma \ref{lm2}, $\|u_n\|_{\cX(B_1)}\to+\infty$. In particular, either $[u_n]_{s,G,*}\to+\infty$ or $\|u_n\|_{L^2(B_1)}\to+\infty$. Let us focus in the former case i.e., $[u_n]_{s,G,*}\to+\infty$. By Lemma \ref{lm3}, 
	\begin{equation}\label{a6}
	\|u_n\|^2_{L^2(B_1)}+[u_n]^{q^-}_{s,G,*}\leq\int_{B_1}|u_n(x)|^2\ dx+\iint_{\cQ}G\Bigg(\frac{|u_n(x)-u_n(y)|}{|x-y|^s}\Bigg)\frac{dxdy}{|x-y|^{N}}.
	\end{equation}
	Recalling that $2\leq q^-$, then \eqref{a5} and \eqref{a6} yield
	\begin{align}\label{a7}
	\nonumber\|u_n\|^2_{\cX(B_1)}&=(\|u_n\|_{L^2(B_1)}+[u_n]_{s,G,*})^2\leq 2(\|u_n\|^2_{L^2(B_1)}+[u_n]^{2}_{s,G,*})\leq2(\|u_n\|^2_{L^2(B_1)}+[u_n]^{q^-}_{s,G,*})\\
	\nonumber&\leq 2\Bigg(\int_{B_1}|u_n(x)|^2\ dx+\iint_{\cQ}G\Bigg(\frac{|u_n(x)-u_n(y)|}{|x-y|^s}\Bigg)\frac{dxdy}{|x-y|^{N}}\Bigg)\\
	\nonumber&\leq 2\max\Bigg\{\frac{1}{C_1},\frac{1}{C_3}\Bigg\}\Bigg(C_3\int_{B_1}|u_n(x)|^2\ dx+C_1\iint_{\cQ}G\Bigg(\frac{|u_n(x)-u_n(y)|}{|x-y|^s}\Bigg)\frac{dxdy}{|x-y|^{N}}\Bigg)\\
	&\leq C_0+C\|u_n\|_V\leq C_0+C'\|u_n\|_{\cX(B_1)}, 
	\end{align}
	where in the latter, we have used Lemma \ref{lm2}. We then deduce from \eqref{a7} that $\{u_n\}$ is bounded in $\cX(B_1)$ which is a contradiction. Thus, $\{[u_n]_{s,G,*}\}$ is bounded i.e., $[u_n]_{s,G,*}\leq M$ for some $M>0$. So,
	\begin{align*}
	C_3\|u_n\|^2_{L^2(B_1)}&\leq C_1\iint_{\cQ}G\Bigg(\frac{|u_n(x)-u_n(y)|}{|x-y|^s}\Bigg)\frac{dxdy}{|x-y|^{N}}+C_2\int_{B_1}|u_n(x)|^p\ dx+C_3\int_{B_1}|u_n(x)|^2\ dx\\
	&\leq C_0+C\|u_n\|_V\leq C_0+C_4\|u_n\|_{\cX(B_1)}\leq C_0+C_4M+C_4\|u_n\|_{L^2(B_1)}.
	\end{align*}
	In particular, $\{u_n\}$ is also bounded in $L^2(B_1)$ and thus we cannot have $\|u_n\|_{L^2(B_1)}\to+\infty$. In conclusion, $\{u_n\}$ is bounded in $V$. Hence, after passing to a subsequence, there exists $\overline{u}\in V$ such that 
	\begin{equation}\label{a8}
	\begin{aligned}
	&u_n\rightharpoonup\overline{u}~~\text{weakly in}~V\\
	&u_n\to\overline{u}~~\text{strongly in}~L^2(B_1)\\
	&u_n\to\overline{u}~~\text{a.e. in}~B_1.
	\end{aligned}
	\end{equation}
	Notice that the second limit in \eqref{a8} is a direct consequence of the fact that $u_n\rightharpoonup\overline{u}$ weakly in $\cX(B_1)$ and that the embedding $\cX(B_1)\hookrightarrow L^2(B_1)$ is compact, thanks to Remark \ref{rmk1}. On the other hand, by Lemma \ref{radial-lemma} and from the the boundedness of $\{u_n\}$ in $\cX(B_1)$, we deduce that $\{u_n\}$ is also bounded in $L^{\infty}(B_1)$. In particular,
	\begin{equation}\label{a9}
	u_n\to\overline{u}~~\text{strongly in}~L^{r}(B_1) ~\text{for all}~r\geq1.
	\end{equation}
	We now wish to show that
	\begin{equation}\label{a10}
	u_n\to\overline{u}~~\text{strongly in}~V.
	\end{equation}
   By Fatou's Lemma, there holds
   \begin{equation}\label{a11}
   \iint_{\cQ}G\Bigg(\frac{|\overline{u}(x)-\overline{u}(y)|}{|x-y|^s}\Bigg)\frac{dxdy}{|x-y|^{N}}\leq\liminf_{n\to+\infty}\iint_{\cQ}G\Bigg(\frac{|u_n(x)-u_n(y)|}{|x-y|^s}\Bigg)\frac{dxdy}{|x-y|^{N}}.
   \end{equation}
	Substituting $v=\overline{u}$ in \eqref{a1}, we have
	\begin{align}\label{a12}
\nonumber	\int_{B_1}u_n^{p-1}(\overline{u}-u_n)\ dx&+\iint_{\cQ}G\Bigg(\frac{|u_n(x)-u_n(y)|}{|x-y|^s}\Bigg)\frac{dxdy}{|x-y|^{N}}-\iint_{\cQ}G\Bigg(\frac{|\overline{u}(x)-\overline{u}(y)|}{|x-y|^s}\Bigg)\frac{dxdy}{|x-y|^{N}}\\
	&+\frac{1}{2}\int_{B_1}|u_n|^2\ dx-\frac{1}{2}\int_{B_1}|\overline{u}|^2\ dx\leq\varepsilon_n\|u_n-\overline{u}\|_{V}.
	\end{align}
	From \eqref{a9} and the boundedness of $\{u_n\}$ in $L^{\infty}(B_1)$, we have
	\begin{equation*}
	\int_{B_1}u_n^{p-1}(\overline{u}-u_n)\ dx\to0~~~\text{and}~~~\frac{1}{2}\int_{B_1}|u_n|^2\ dx-\frac{1}{2}\int_{B_1}|\overline{u}|^2\ dx\to0~~\text{as}~~n\to\infty.
	\end{equation*}
	Moreover, since $\|u_n-\overline{u}\|_V$ is bounded for $n$ sufficiently large, then by passing to the limit as $n\to\infty$ in \eqref{a12}, there holds that
	\begin{equation}\label{a13}
	\liminf_{n\to\infty}\iint_{\cQ}G\Bigg(\frac{|u_n(x)-u_n(y)|}{|x-y|^s}\Bigg)\frac{dxdy}{|x-y|^{N}}\leq\iint_{\cQ}G\Bigg(\frac{|\overline{u}(x)-\overline{u}(y)|}{|x-y|^s}\Bigg)\frac{dxdy}{|x-y|^{N}}.
	\end{equation}
	Thus, from \eqref{a11} and \eqref{a13} we deduce that
		\begin{equation}\label{a14}
	\liminf_{n\to\infty}\iint_{\cQ}G\Bigg(\frac{|u_n(x)-u_n(y)|}{|x-y|^s}\Bigg)\frac{dxdy}{|x-y|^{N}}=\iint_{\cQ}G\Bigg(\frac{|\overline{u}(x)-\overline{u}(y)|}{|x-y|^s}\Bigg)\frac{dxdy}{|x-y|^{N}}.
	\end{equation}
	To complete the proof of \eqref{a10}, it remains to show that $[u_n-\overline{u}]_{s,G,*}\to0$ as $n\to\infty$. We argue by contadiction. Assume that $[u_n-\overline{u}]_{s,G,*}$ does not converges to $0$ as $n\to\infty$, that is, there exists $\varepsilon>0$ and a subsequence $\{u_{n_k}\}\subset K$ of $\{u_n\}$ such that
	\begin{equation}\label{a15}
	\Big[\frac{n_{n_k}-\overline{u}}{2}\Big]_{s,G,*}>\varepsilon.
	\end{equation}
	Then, by Lemma \ref{lm3},
	\begin{equation}\label{a16}
	\rho_{s,G,*}\Big(\frac{u_{n_k}-\overline{u}}{2}\Big)>\xi^{-}(\varepsilon),
	\end{equation}
	where $\rho_{s,G,*}(u)=\iint_{\cQ}G\Big(\frac{|u(x)-u(y)|}{|x-y|^s}\Big)\frac{dxdy}{|x-y|^{N}}$.
	
	On the other hand, using Lemma \ref{lm4} with $a=\frac{u_{n_k}(x)-u_{n_k}(y)}{|x-y|^s}$ and $b=\frac{\overline{u}(x)-\overline{u}(y)}{|x-y|^s}$, there hold that
	\begin{align*}
	\frac{1}{2}\Bigg\{G\Big(\frac{|u_{n_k}(x)-u_{n_k}(y)|}{|x-y|^s}\Big)&+G\Big(\frac{|\overline{u}(x)-\overline{u}(y)|}{|x-y|^s}\Big)\Bigg\}-G\Big(\frac{|(u_{n_k}+\overline{u})(x)-(u_{n_k}+\overline{u})(y)|}{2|x-y|^s}\Big)\\
	&\geq G\Big(\frac{|(u_{n_k}-\overline{u})(x)-(u_{n_k}-\overline{u})(y)|}{2|x-y|^s}\Big), 
	\end{align*}
	which together with \eqref{a16} yield 
	\begin{equation}\label{a17}
	\frac{1}{2}\Big(\rho_{s,G,*}(u_{n_k})+\rho_{s,G,*}(\overline{u})\Big)-\rho_{s,G,*}\Big(\frac{u_{n_k}+\overline{u}}{2}\Big)\geq \rho_{s,G,*}\Big(\frac{u_{n_{k}}-\overline{u}}{2}\Big)>\xi^{-}(\varepsilon). 
	\end{equation}
	Now, taking the $\liminf$ in \eqref{a17} and recalling \eqref{a14}, we get
	\begin{equation}\label{a18}
	\rho_{s,G,*}(\overline{u})-\xi^{-}(\varepsilon)\geq\liminf_{k\to\infty}\rho_{s,G,*}\Big(\frac{u_{n_k}+\overline{u}}{2}\Big).
	\end{equation}
	Since $\frac{u_{n_k}+\overline{u}}{2}$ converges weakly to $\overline{u}$ in $\cX_{rad}(B_1)$, and modulars are lower semicontinuous with respect to the weak convergence, we get 
	\begin{equation}\label{a19}
	\rho_{s,G,*}(\overline{u})\leq\liminf_{k\to\infty}\rho_{s,G,*}\Big(\frac{u_{n_k}+\overline{u}}{2}\Big).
	\end{equation}
	From \eqref{a18} and \eqref{a19} we get a contradiction. Thus $[u_n-\overline{u}]_{s,G,*}\to0$ as $n\to\infty$. This implies that $u_{n}\to\overline{u}$ in $\cX_{rad}(B_1)$ and thus \eqref{a10} follows. The proof is finished. 
\end{proof}
The following lemma tells us that the pointwise invariance condition $(ii)$ of Theorem \ref{variational-theorem} is satisfied.

\begin{lemma}\label{p-w-c}
	Suppose that $\overline{u}\in K$. Then there exists $v\in K$ solving 
	\begin{equation}\label{a20}
	\left\{
	\begin{aligned}
	(-\Delta_g)^sv+v&=|\overline{u}|^{p-2}\overline{u}~~\text{in}~B_1,\\
	\cN_gv&=0~~~~~~~~~\text{in}~\R^N\setminus\overline{B}_1,
	\end{aligned}
	\right.
	\end{equation}
	in the weak sense, namely,
	\begin{align}\label{v-tilde-weak-formulation-1}
	\iint_{\cQ}g\Bigg(\frac{|v(x)-v(y)|}{|x-y|^s}\Bigg)\frac{v(x)-v(y)}{|v(x)-v(y)|}\frac{\phi(x)-\phi(y)}{|x-y|^{s}}\frac{dxdy}{|x-y|^N}+\int_{B_1}v\phi\ dx=\int_{B_1}|\overline{u}|^{p-2}\overline{u}\phi\ dx, 
	\end{align}
	for all $\phi\in V$. 
\end{lemma}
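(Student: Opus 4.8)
The plan is to produce $v$ as the minimiser of the convex energy attached to the (monotone) linear problem \eqref{a20}, to recognise its Euler--Lagrange equation as the weak formulation \eqref{v-tilde-weak-formulation-1}, and finally to force $v\in K$ by a rearrangement comparison. Write $f:=|\overline{u}|^{p-2}\overline{u}$; since $\overline{u}\in K$ is nonnegative and, by Lemma \ref{radial-lemma}, bounded, $f\in L^{\infty}(B_1)$ is nonnegative, and since $\overline{u}$ is radially non-decreasing and $t\mapsto t^{p-1}$ is increasing, $f$ is radially non-decreasing as well. Consider
\[
E(w):=\Psi(w)-\int_{B_1}fw\,dx=\iint_{\cQ}G\!\left(\frac{|w(x)-w(y)|}{|x-y|^s}\right)\frac{dxdy}{|x-y|^N}+\frac12\int_{B_1}|w|^2\,dx-\int_{B_1}fw\,dx
\]
on $\cX_{rad}(B_1)$, so that $E=\Psi|_{\cX_{rad}(B_1)}-\langle D\Phi(\overline{u}),\cdot\rangle$.

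\emph{Existence of a minimiser and the equation.} The functional $E$ is convex, since $G$ is convex and the remaining terms are convex or affine. It is coercive on $\cX_{rad}(B_1)$: by Lemma \ref{lm3} and $q^-\geq 2$ one has $\rho_{s,G,*}(w)\geq\xi^-([w]_{s,G,*})$, which blows up with $[w]_{s,G,*}$, while $\big|\int_{B_1}fw\,dx\big|\leq\|f\|_{L^\infty(B_1)}\,\omega_N^{1/2}\,\|w\|_{L^2(B_1)}$ is absorbed by the quadratic term $\tfrac12\|w\|_{L^2(B_1)}^2$; hence $E(w)\to+\infty$ as $\|w\|_{\cX(B_1)}\to\infty$. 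Finally $E$ is sequentially weakly lower semicontinuous: the modular $\rho_{s,G,*}$ is convex and lower semicontinuous hence weakly lower semicontinuous, $\|\cdot\|_{L^2(B_1)}^2$ is weakly lower semicontinuous, and $w\mapsto\int_{B_1}fw\,dx$ is weakly continuous because the embedding $\cX(B_1)\hookrightarrow L^2(B_1)$ is compact (Remark \ref{rmk1}). As $\cX_{rad}(B_1)$ is a closed subspace of the reflexive space $\cX(B_1)$, the direct method gives a minimiser $v\in\cX_{rad}(B_1)$. Since $\Psi$ is G\^ateaux differentiable and $E$ is convex, $v$ is a critical point, i.e. $\langle DE(v),\phi\rangle=0$ for all $\phi\in\cX_{rad}(B_1)$; this is precisely \eqref{v-tilde-weak-formulation-1}, and since $V\subset\cX_{rad}(B_1)$ it holds for every $\phi\in V$. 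Testing against $\phi$ supported in $B_1$ yields $(-\Delta_g)^sv+v=f$ in $B_1$, and testing against $\phi$ supported in $\R^N\setminus\overline{B}_1$ yields $\cN_g v=0$ there, by the structure of $\cQ=\R^{2N}\setminus(B_1^c)^2$ as in \cite{bahrouni2021neumann}. Thus $v$ solves \eqref{a20} weakly.

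\emph{Membership in $K$.} Let $\widehat v$ be the radially non-decreasing rearrangement of $|v|$ on $B_1$. Then $\widehat v\geq 0$ and $\widehat v$ is radially non-decreasing, so $\widehat v\in K$; being equimeasurable with $|v|$ it satisfies $\|\widehat v\|_{L^2(B_1)}=\|v\|_{L^2(B_1)}$. Since both $f$ and $\widehat v$ are radially non-decreasing, the Hardy--Littlewood inequality gives $\int_{B_1}f\widehat v\,dx\geq\int_{B_1}f|v|\,dx\geq\int_{B_1}fv\,dx$; and using $\big||v(x)|-|v(y)|\big|\leq|v(x)-v(y)|$ together with a P\'olya--Szeg\H{o}-type inequality for the modular $\rho_{s,G,*}$ one gets $\rho_{s,G,*}(\widehat v)\leq\rho_{s,G,*}(|v|)\leq\rho_{s,G,*}(v)$. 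Hence $E(\widehat v)\leq E(v)=\min_{\cX_{rad}(B_1)}E$, so $\widehat v$ is itself a minimiser, hence (by the previous step) a weak solution of \eqref{a20}; replacing $v$ by $\widehat v$ furnishes the required $v\in K$. Alternatively, if one checks that the weak solution of \eqref{a20} is unique — by strict monotonicity of $w\mapsto(-\Delta_g)^sw+w$ on $\cX_{rad}(B_1)$ — the chain of inequalities forces $v=\widehat v$ directly.

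\emph{Main obstacle.} The first two steps are routine given Lemmas \ref{lm3} and \ref{radial-lemma} and Remark \ref{rmk1}. The delicate point is the last step: one must justify the P\'olya--Szeg\H{o}-type inequality $\rho_{s,G,*}(\widehat v)\leq\rho_{s,G,*}(v)$ for the increasing rearrangement on the ball in the fractional Orlicz setting, being careful about the contribution of the cross region $B_1\times(\R^N\setminus B_1)$ in $\cQ$ — equivalently, about the behaviour of the competitor outside $B_1$. Once this rearrangement estimate is in place, the proof closes.
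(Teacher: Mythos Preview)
Your existence step and derivation of the Euler--Lagrange equation coincide with the paper's argument (the paper calls the functional $J$ rather than $E$). The divergence, and the gap, is entirely in how you force $v\in K$.

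The rearrangement step is not completed and is genuinely problematic. The modular $\rho_{s,G,*}$ is computed over $\cQ=(B_1\times B_1)\cup(B_1\times B_1^c)\cup(B_1^c\times B_1)$, so it depends on the values of the competitor on all of $\R^N$, whereas your rearrangement $\widehat v$ is only defined on $B_1$; you never say how to extend it to $B_1^c$, and any choice feeds into the cross terms. Even on $B_1\times B_1$ alone, the known nonlocal P\'olya--Szeg\H{o} inequalities concern the symmetric \emph{decreasing} rearrangement on $\R^N$, not an increasing rearrangement on a ball, and there is no reason to expect $\rho_{s,G,*}(\widehat v)\leq\rho_{s,G,*}(v)$ to hold in general here. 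Since your entire membership argument (including the uniqueness alternative) rests on this inequality, the proof does not close.

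The paper avoids rearrangement completely and argues directly from the equation. Nonnegativity comes from testing \eqref{v-tilde-weak-formulation-1} with $\phi=v^-$ and using $(v(x)-v(y))(v^-(x)-v^-(y))\leq -(v^-(x)-v^-(y))^2$. Radial monotonicity is obtained by a barrier-type argument borrowed from \cite{amundsen2023radial,Colasuonno2017supercritical}: fixing $r_0\in(0,1)$, one shows that for every $r\in(r_0,1)$ either $v(t)\leq v(r)$ for all $t\in(r_0,r)$ or $v(t)\geq v(r)$ for all $t\in(r,1)$. Concretely, if $\overline u(r)^{p-1}\leq v(r)$ one tests with $\phi=(v(|x|)-v(r))^+\mathbbm{1}_{\{r_0<|x|\leq r\}}$, uses the monotonicity of $\overline u$ to make the right-hand side nonpositive, and the sign identity $(a-b)\big((a-c)^+-(b-c)^+\big)\geq\big((a-c)^+-(b-c)^+\big)^2$ to make the nonlocal term nonnegative, forcing $(v-v(r))^+=0$ on the inner annulus; the other case is symmetric, testing with $(v(|x|)-v(r))^-\mathbbm{1}_{\{|x|>r\}}$. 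This dichotomy at every $r$ forces $v$ to be non-decreasing, and no rearrangement inequality is needed.
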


\begin{proof}
	Introduce the functional $J:\cX_{rad}(B_1)\to\R$ as
	\begin{equation*}
	J(v)=\iint_{\cQ}G\Bigg(\frac{|v(x)-v(y)|}{|x-y|^s}\Bigg)\frac{dxdy}{|x-y|^{N}}+\frac{1}{2}\int_{B_1}|v(x)|^2\ dx-\int_{B_1}|\overline{u}(x)|^{p-2}\overline{u}v\ dx. 
	\end{equation*}
	Then $J$ is convex, lower-semicontinuous on $\cX_{rad}(B_1)$. Moreover, $J$ is coercive on $\cX_{rad}(B_1)$. In fact, in view of Lemma \ref{lm3}, we have
	\begin{equation*}
	J(v)\geq\xi^{-}([v]_{s,G,*})+\|v\|^2_{L^2(B_1)}-c\|v\|_{L^2(B_1)}
	\end{equation*}
	where $c>0$ is a positive constant depending on $p$. So $\lim\limits_{\|v\|_{\cX(B_1)}\to\infty}J(v)=+\infty$. In particular, $J$ achieves its minimum at some point $v\in\cX_{rad}(B_1)$. It is not difficult to see that $v$ satisfies 
	\begin{align}\label{v-weak-formulation}
	\iint_{\cQ}g\Bigg(\frac{|v(x)-v(y)|}{|x-y|^s}\Bigg)\frac{v(x)-v(y)}{|v(x)-v(y)|}\frac{\phi(x)-\phi(y)}{|x-y|^{s}}\frac{dxdy}{|x-y|^N}+\int_{B_1}v\phi\ dx=\int_{B_1}|\overline{u}|^{p-2}\overline{u}\phi\ dx, 
	\end{align}
	for all $\phi\in \cX(B_1)$. Taking $\phi\equiv0$ in $\overline{B}_1$ in \eqref{v-weak-formulation} and from the integration by parts formula (see \cite[Proposition 2.6]{bahrouni2021neumann}) there holds
	\begin{align*}
	0=	\iint_{\cQ}g\Bigg(\frac{|v(x)-v(y)|}{|x-y|^s}\Bigg)\frac{v(x)-v(y)}{|v(x)-v(y)|}\frac{\phi(x)-\phi(y)}{|x-y|^{s}}\frac{dxdy}{|x-y|^N}=\int_{\R^N\setminus\overline{B}_1}\phi(x)\cN_gv(x)\ dx 
	\end{align*}
	from which we deduce that $\cN_gv=0$ a.e. in $\R^N\setminus\overline{B}_1$. In other words, $v$ satifies the Neumann condition. To complete the proof, it remains to show that $v\in K$. 
	
	We claim that $v\geq0$. Indeed, to see this, we use $\phi=v^{-}$ as a test function in \eqref{v-weak-formulation} to get
\begin{align}\label{b}
\nonumber\iint_{\cQ}g\Bigg(\frac{|v(x)-v(y)|}{|x-y|^s}\Bigg)\frac{v(x)-v(y)}{|v(x)-v(y)|}\frac{v^{-}(x)-v^{-}(y)}{|x-y|^{s}}&\frac{dxdy}{|x-y|^N}+\int_{B_1}vv^{-}\ dx\\
&=\int_{B_1}|\overline{u}|^{p-2}\overline{u}v^{-}\ dx. 
\end{align}
Now, since $(v(x)-v(y))(v^{-}(x)-v^{-}(y))\leq-(v^{-}(x)-v^{-}(y))^2$ and $v(x)v^{-}(x)=-|v^{-}(x)|^2$, then \eqref{b} becomes
\begin{align}\label{b'}
-\iint_{\cQ}g\Bigg(\frac{|v(x)-v(y)|}{|x-y|^s}\Bigg)\frac{(v^{-}(x)-v^{-}(y))^2}{|v(x)-v(y)|}\frac{dxdy}{|x-y|^{N+s}}-\int_{B_1}|v^{-}(x)|^2\ dx\geq\int_{B_1}|\overline{u}|^{p-2}\overline{u}v^{-}\ dx. 
\end{align}
Since the right-hand side of \eqref{b'} is non-negative, we then deduce that
\begin{equation*}
\int_{B_1}|v^{-}(x)|^2\ dx=0,
\end{equation*}
which implies in particular that $v^{-}(x)=0$ for a.e. $x\in B_1$. So, $v\geq0$ in $B_1$. This completes the proof of the claim.

We now show that $v$ is non-decreasing with respect to the radial variable. We borrow some ideas in \cite{amundsen2023radial} (see also \cite{Colasuonno2017supercritical}). Fix $r_0\in(0,1)$. It suffices to show that for every $r\in(r_0,1)$, one of the following situation occurs
\begin{itemize}
	\item [$(i)$] $v(t)\leq v(r)$  for all $t\in(r_0,r)$
	
	\item [$(ii)$] $v(t)\geq v(r)$  for all $t\in(r,1)$.
\end{itemize}
If $|\overline{u}(r)|^{p-2}\overline{u}(r)\leq v(r)$, we use
\begin{equation*}
\phi(x)=\left\{
\begin{aligned}
&(v(|x|)-v(r))^{+}~~\text{if}~~r_0<|x|\leq r\\
&0~~~~~~~~~~~~~~~~~~~~~\text{otherwise}
\end{aligned}
\right.
\end{equation*}
as a test function in \eqref{v-weak-formulation} to see that
\begin{align*}
&\iint_{\R^{2N}\setminus((B_r\setminus B_{r_0})^c)^2}g\Bigg(\frac{|v(x)-v(y)|}{|x-y|^s}\Bigg)\frac{v(x)-v(y)}{|v(x)-v(y)|}\frac{(v(|x|)-v(r))^{+}-(v(|y|)-v(r))^{+}}{|x-y|^{s}}\frac{dxdy}{|x-y|^N}\\
&~~~~~~~~~~~~~~+\int_{B_r\setminus B_{r_0}}v(x)(v(|x|)-v(r))^{+}\ dx=\int_{B_r\setminus B_{r_0}}|\overline{u}(x)|^{p-2}\overline{u}(x)(v(|x|)-v(r))^{+}\ dx.
\end{align*}
Since $v(x)(v(|x|)-v(r))^{+}=|(v(|x|)-v(r))^{+}|^2+v(r)(v(|x|)-v(r))^{+}$, the above equation becomes 
\begin{align}\label{b5}
\nonumber&\iint_{\R^{2N}\setminus((B_r\setminus B_{r_0})^c)^2}g\Bigg(\frac{|v(x)-v(y)|}{|x-y|^s}\Bigg)\frac{v(x)-v(y)}{|v(x)-v(y)|}\frac{(v(|x|)-v(r))^{+}-(v(|y|)-v(r))^{+}}{|x-y|^{s}}\frac{dxdy}{|x-y|^N}\\
\nonumber&~~~~~+\int_{B_r\setminus B_{r_0}}|(v(|x|)-v(r))^{+}|^2\ dx=\int_{B_r\setminus B_{r_0}}(|\overline{u}(x)|^{p-2}\overline{u}(x)-v(r))(v(|x|)-v(r))^{+}\ dx\\
&~~~~~~~~~~~~~~~~~~~~~~~~~~~~~~~~~~~~~~~~~~~~~~\leq\int_{B_r\setminus B_{r_0}}(|\overline{u}(r)|^{p-2}\overline{u}(r)-v(r))(v(|x|)-v(r))^{+}\ dx,
\end{align}
where in the latter, we have used the monotonicity of $\overline{u}$. Since, $(v(x)-v(y))((v(|x|)-v(r))^{+}-(v(|y|)-v(r))^+)\geq ((v(|x|)-v(r))^{+}-(v(|y|)-v(r))^+)^2$ then the first term on the left-hand side of \eqref{b5} is non-negative. Since by assumption the right-hand side of \eqref{b5} is non-positive, we deduce that
$$\int_{B_r\setminus B_{r_0}}|(v(|x|)-v(r))^{+}|^2\ dx=0$$
from which we get in particular that $(v(|x|)-v(r))^{+}=0$ for all $r_0<|x|\leq r$. Thus, $(i)$ holds.

Similarly, when $|\overline{u}(r)|^{p-2}\overline{u}(r)> v(r)$, we use
\begin{equation*}
\phi(x)=\left\{
\begin{aligned}
&0~~~~~~~~~~~~~~~~~~~~~~~~\text{if}~~r_0<|x|\leq r\\
&(v(|x|)-v(r))^{-}~~~~~\text{otherwise} 
\end{aligned}
\right.
\end{equation*}
as a test function in \eqref{v-weak-formulation} to see that
\begin{align*}
&\iint_{\R^{2N}\setminus(B_r^c)^2}g\Bigg(\frac{|v(x)-v(y)|}{|x-y|^s}\Bigg)\frac{(v(|x|)-v(r))^{-}-(v(|y|)-v(r))^{-}}{|x-y|^{s}}\frac{dxdy}{|x-y|^N}\\
&~~~~~~~~~~~~~~+\int_{B_r^c}v(x)(v(|x|)-v(r))^{-}\ dx=\int_{B_r^c}|\overline{u}(x)|^{p-2}\overline{u}(x)(v(|x|)-v(r))^{-}\ dx. 
\end{align*} 
Since $v(x)(v(|x|)-v(r))^{-}=-|(v(|x|)-v(r))^{-}|^2+v(r)(v(|x|)-v(r))^{-}$, then it follows from the above equation that
\begin{align}\label{b6}
\nonumber&\iint_{\R^{2N}\setminus(B_r^c)^2}g\Bigg(\frac{|v(x)-v(y)|}{|x-y|^s}\Bigg)\frac{v(x)-v(y)}{|v(x)-v(y)|}\frac{(v(|x|)-v(r))^{-}-(v(|y|)-v(r))^{-}}{|x-y|^{s}}\frac{dxdy}{|x-y|^N}\\
\nonumber&~~~~~-\int_{B_r^c}|(v(|x|)-v(r))^{-}|^2\ dx=\int_{B_r^c}(|\overline{u}(x)|^{p-2}\overline{u}(x)-v(r))(v(|x|)-v(r))^{-}\ dx\\
&~~~~~~~~~~~~~~~~~~~~~~~~~~~~~~~~~~~~~~~~~~\geq\int_{B_r^c}(|\overline{u}(r)|^{p-2}\overline{u}(r)-v(r))(v(|x|)-v(r))^{-}\ dx. 
\end{align}
Note that in the latter, we have used again the monotonicity of $\overline{u}$. Now, since $(v(x)-v(y))((v(|x|)-v(r))^{-}-(v(|y|)-v(r))^{-})\leq -((v(|x|)-v(r))^{-}-(v(|y|)-v(r))^{-})^2$ then the first term on the left-hand side of \eqref{b6} is non-positive. On the other hand, since by assumption the right-hand side of \eqref{b6} is non-negative, we then deduce that $$\int_{B_r^c}|(v(|x|)-v(r))^{-}|^2\ dx=0,$$ and thus $(v(|x|)-v(r))^{-}=0$ for $r<|x|<1$. So, $(ii)$ follows.  
\end{proof}
We are now ready to prove our first main result.
\begin{proof}[Proof of Theorem \ref{first-main-result} (completed)]
	In view of Lemmas \ref{mpg} and \ref{ps}, the functional $I_K$ satisfies the Palais-Small condition and possesses the Mountain Pass Geometry. Thus, from the Mountain Pass Theorem (see Theorem \ref{mountain-pass-theorem}), $I_K$ admits a critical point $\ov u\in V$ in the sense of Definition \ref{def3}. On the other hand, the triple $(\Psi,K,\Phi)$ satisfies the pointwise invariance condition at $\ov u$, thanks to Lemma \ref{p-w-c}. Thus, by Theorem \ref{variational-theorem}, we deduce that $\ov u$ is a weak solution of \eqref{e2}.   
\end{proof}

\section{Proof of Theorem \ref{second-main-result}}\label{section:non-constancy}
The goal of this section is to prove Theorem \ref{second-main-result}.

\begin{proof}[Proof of Theorem \ref{second-main-result}]
	The existence of a solution $\ov u\in K$ of \eqref{e2} has been proved in Theorem \ref{first-main-result}. We now show that $\ov u$ is not constant. Notice first that problem \eqref{e2} has only two constant solutions, $0$ and $1$, and, from its energy level, we know that $\ov u\not\equiv 0$. Let us show now that $\overline{u}\not\equiv 1$. To this end, we first recall that $I(\overline{u})=I_K(\overline{u})=c$ where $c$ is defined (see \eqref{critical-value}) by
	\begin{equation}
	c=\inf_{\gamma\in\Gamma}\sup_{t\in [0,1]}I(\gamma(t)),
	\end{equation}
	where $\Gamma=\{\gamma\in C([0,1],V): \gamma(0)=0\neq\gamma(1), I(\gamma(1))\leq0\}$. So, to get $\overline{u}\not\equiv 1$, it suffices to show that $c<I_K(1)$.

	Let $v\in\cX_{rad}(B_1)$ be a non-constant, non-decreasing function. Then by Lemm \ref{radial-lemma}, $v\in L^{\infty}(B_1)$. Take $\tau>0$ such that $\tau<\frac{1}{\|v\|_{L^{\infty}(B_1)}}$. Notice that for such $\tau$, there holds that $r(1+\tau v)\in K$ for all $r>0$. Now, for $0<\tau<1$ and $r>1$ we define $\gamma_{\tau,r}:[0,1]\to V$ by $\gamma_{\tau,r}(t)=r(1+\tau v)t$.
	
	By Lemma \ref{lm5}-(i),
\begin{align}\label{d}
\nonumber	I_K(\gamma_{\tau,r}(t))&=\iint_{\cQ}G\Bigg(\frac{|(r(1+\tau v)t)(x)-(r(1+\tau v)t)(y)|}{|x-y|^s}\Bigg)\frac{dxdy}{|x-y|^{N}}\\
\nonumber	&+\frac{1}{2}\int_{B_1}|(r(1+\tau v)t)(x)|^2\ dx-\frac{1}{p}\int_{B_1}|(r(1+\tau v)t)(x)|^p\ dx\\
\nonumber	&=\iint_{\cQ}G\Bigg(r\tau t\frac{| v(x)- v(y)|}{|x-y|^s}\Bigg)\frac{dxdy}{|x-y|^{N}}+\frac{r^2t^2}{2}\int_{B_1}|((1+\tau v))(x)|^2\ dx\\
\nonumber	&~~~~~~~~~~~~~~~~~~~~~~~-\frac{r^pt^p}{p}\int_{B_1}|((1+\tau v))(x)|^p\ dx\\
	&\leq r^{q^{+}}(\tau t)^{q^{-}}\iint_{\cQ}G\Bigg(\frac{| v(x)- v(y)|}{|x-y|^s}\Bigg)\frac{dxdy}{|x-y|^{N}}+\frac{r^2t^2}{2}\int_{B_1}|((1+\tau v))(x)|^2\ dx\\
\nonumber	&~~~~~~~~~~~~~~~~~~~~~~~~-\frac{r^pt^p}{p}\int_{B_1}|((1+\tau v))(x)|^p\ dx\\
\nonumber	&\leq\frac{t^2}{2}\Bigg(2r^{q^{+}}\tau^{q^{-}}\iint_{\cQ}G\Bigg(\frac{| v(x)- v(y)|}{|x-y|^s}\Bigg)\frac{dxdy}{|x-y|^{N}}+r^2\int_{B_1}|((1+\tau v))(x)|^2\ dx\Bigg)\\
\nonumber	&~~~~~~~~~~~~~~~~~~~~~~-\frac{t^p}{p}\Bigg(r^p\int_{B_1}|((1+\tau v))(x)|^p\ dx\Bigg).
	\end{align}
Thus,
\begin{equation*}
\max_{t\in[0,1]}I_K(\gamma_{\tau,r}(t))\leq\max_{t\in[0,1]}\phi(t)
\end{equation*}
where
\begin{align*}
\phi(t)&=\frac{t^2}{2}\Bigg(2r^{q^{+}}\tau^{q^{-}}\iint_{\cQ}G\Bigg(\frac{| v(x)- v(y)|}{|x-y|^s}\Bigg)\frac{dxdy}{|x-y|^{N}}+r^2\int_{B_1}|((1+\tau v))(x)|^2\ dx\Bigg)\\
&~~~~~~~~~~-\frac{t^p}{p}\Bigg(r^p\int_{B_1}|((1+\tau v))(x)|^p\ dx\Bigg).
\end{align*}
Notice that $\phi$ attains its maximum at 
\begin{equation*}
t_0=\Bigg(\frac{2r^{q^{+}}\tau^{q^{-}}\iint_{\cQ}G\Big(\frac{| v(x)- v(y)|}{|x-y|^s}\Big)\frac{dxdy}{|x-y|^{N}}+r^2\int_{B_1}|((1+\tau v))(x)|^2\ dx}{r^p\int_{B_1}|((1+\tau v))(x)|^p\ dx}\Bigg)^{\frac{1}{p-2}}
\end{equation*}
with
\begin{equation*}
\phi(t_0)=\Bigg(\frac{1}{2}-\frac{1}{p}\Bigg)\frac{\Big(2r^{q^{+}}\tau^{q^{-}}\iint_{\cQ}G\Big(\frac{| v(x)- v(y)|}{|x-y|^s}\Big)\frac{dxdy}{|x-y|^{N}}+r^2\int_{B_1}|((1+\tau v))(x)|^2\ dx\Big)^{\frac{p}{p-2}}}{\Big(r^p\int_{B_1}|((1+\tau v))(x)|^p\ dx\Big)^{\frac{2}{p-2}}}.
\end{equation*}
So,
\begin{align*}
\max_{t\in[0,1]}I_K(\gamma_{\tau,r}(t))&\leq\max_{t\in[0,1]}\phi(t)=\phi(t_0)\\
&=\Big(\frac{1}{2}-\frac{1}{p}\Big)\frac{\Big(2r^{q^{+}}\tau^{q^{-}}\iint_{\cQ}G\Big(\frac{| v(x)- v(y)|}{|x-y|^s}\Big)\frac{dxdy}{|x-y|^{N}}+r^2\int_{B_1}|((1+\tau v))(x)|^2\ dx\Big)^{\frac{p}{p-2}}}{\Big(r^p\int_{B_1}|((1+\tau v))(x)|^p\ dx\Big)^{\frac{2}{p-2}}}. 
\end{align*}
We claim that there exist $0<\tau<1$ and $r>1$ such that
\begin{equation}\label{claim}
\max_{t\in[0,1]}I_K(\gamma_{\tau,r}(t))<I_K(1)=\Big(\frac{1}{2}-\frac{1}{p}\Big)\omega_N
\end{equation}
where $\omega_N=|B_1|$. To see this, it is enough to show that
\begin{equation*}
\Big(\frac{1}{2}-\frac{1}{p}\Big)\frac{\Big(2r^{q^{+}}\tau^{q^{-}}\iint_{\cQ}G\Big(\frac{| v(x)- v(y)|}{|x-y|^s}\Big)\frac{dxdy}{|x-y|^{N}}+r^2\int_{B_1}|((1+\tau v))(x)|^2\ dx\Big)^{\frac{p}{p-2}}}{\Big(r^p\int_{B_1}|((1+\tau v))(x)|^p\ dx\Big)^{\frac{2}{p-2}}}<I_K(1)=\Big(\frac{1}{2}-\frac{1}{p}\Big)\omega_N,
\end{equation*}
for some $0<\tau<1$ and $r>1$, that is,
\begin{align*}
&\Bigg(2r^{q^{+}}\tau^{q^{-}}\iint_{\cQ}G\Big(\frac{| v(x)- v(y)|}{|x-y|^s}\Big)\frac{dxdy}{|x-y|^{N}}+r^2\int_{B_1}|((1+\tau v))(x)|^2\ dx\Bigg)^{p}\\
&~~~~~~~~~~~~~~~~~~~<\omega_N^{p-2}\Bigg(r^p\int_{B_1}|((1+\tau v))(x)|^p\ dx\Bigg)^{2}.
\end{align*}
Let $h:\R\to\R$ be the function given by 
\begin{align*}
h(\tau)&=\Bigg(2r^{q^{+}}\tau^{q^{-}}\iint_{\cQ}G\Big(\frac{| v(x)- v(y)|}{|x-y|^s}\Big)\frac{dxdy}{|x-y|^{N}}+r^2\int_{B_1}|((1+\tau v))(x)|^2\ dx\Bigg)^{p}\\
&~~~~~~~~~~~~~~~~~~~-\omega_N^{p-2}\Bigg(r^p\int_{B_1}|((1+\tau v))(x)|^p\ dx\Bigg)^{2}.
\end{align*}
Note that $h$ is at least twice differentiable, $h(0)=0$ and $h'(0)=0$. Moreover,
\begin{align}\label{d1}
\nonumber h''(\tau)=2pr^{2p}\Bigg[&(p-1)\Bigg(2r^{q^{+}-2}\tau^{q^{-}}\iint_{\cQ}G\Big(\frac{| v(x)- v(y)|}{|x-y|^s}\Big)\frac{dxdy}{|x-y|^{N}}+\int_{B_1}|1+\tau v|^2 \Bigg)^{p-2}\\
\nonumber&\times\Bigg(r^{q^{+}-2}q^{-}\tau^{q^{-}-1}\iint_{\cQ}G\Big(\frac{| v(x)- v(y)|}{|x-y|^s}\Big)\frac{dxdy}{|x-y|^{N}}+\int_{B_1}(1+\tau v)v\ dx \Bigg)^2\\
\nonumber&+\Bigg(2r^{q^{+}-2}\tau^{q^{-}}\iint_{\cQ}G\Big(\frac{| v(x)- v(y)|}{|x-y|^s}\Big)\frac{dxdy}{|x-y|^{N}}+\int_{B_1}|1+\tau v|^2\ dx\Bigg)^{p-1}\\
\nonumber&\times\Bigg(r^{q^{+}-2}q^{-}(q^{-}-1)\tau^{q^{-}-2}\iint_{\cQ}G\Big(\frac{| v(x)- v(y)|}{|x-y|^s}\Big)\frac{dxdy}{|x-y|^{N}}+\int_{B_1}v^2\ dx\Bigg)\\
&-\omega_N^{p-2}\Bigg( \Big(\int_{B_1}|1+\tau v|^{p-1}v\Big)^2+\Big(\int_{B_1}|1+\tau v|^p\Big)\Big((p-1)\int_{B_1}|1+\tau v|^{p-2}v^2\Big) \Bigg)
\Bigg].
\end{align}
If $q^{-}>2$, then
\begin{equation*}
h''(0)=2p(p-2)r^{2p}\omega_N^{p-2}\Bigg(\Big(\int_{B_1}v\ dx\Big)^2-\omega_N\int_{B_1}v^2\ dx\Bigg).
\end{equation*}
H\"{o}lder inequality yields $h''(0)<0$. In particular, $h$ admits a local maximum at $\tau=0$. Consequently, $h(\tau)<0$ for $\tau>0$ small enough. Furthermore, from \eqref{d},
\begin{align*}
I_K(\gamma_{\tau,r}(1))&\leq r^{q^{+}}\tau^{q^{-}}\iint_{\cQ}G\Bigg(\frac{| v(x)- v(y)|}{|x-y|^s}\Bigg)\frac{dxdy}{|x-y|^{N}}+\frac{r^2}{2}\int_{B_1}|((1+\tau v))(x)|^2\ dx\\
&~~~~~~~~~~~~~~~~~~~~~~~~-\frac{r^p}{p}\int_{B_1}|((1+\tau v))(x)|^p\ dx.
\end{align*}
Since $2<q^{-}<p$, then there exists $r>1$ such that $I_K(\gamma_{\tau,r}(1))\leq0$ and thus $\gamma_{\tau,r}(1)\in\Gamma$. This completes the proof of claim \eqref{claim}. So,
\begin{equation*}
c\leq\max_{t\in[0,1]}I_K(\gamma_{\tau,r}(t))<I_K(1),
\end{equation*}
as desired.

Now, if $q^{-}=2$, we first notice that by definition of $\Lambda$, for $\Lambda_0>\Lambda$ satisfying
\begin{equation}\label{d2}
\Big(\frac{p}{2}\Big)^{\frac{q^{+}-2}{p-2}}\Lambda_0<(p-2),
\end{equation}
there exists a non-decreasing function $v\in\cX_{rad}(B_1)$ such that $\int_{B_1}v\ dx=0$, $\int_{B_1}|v|^2\ dx=1$ and $\iint_{\cQ}G\Big(\frac{| v(x)- v(y)|}{|x-y|^s}\Big)\frac{dxdy}{|x-y|^{N}}<\frac{\Lambda_0}{2}$. Now, from \eqref{d1}, there holds that
\begin{align*}
h''(0)&=2pr^{2p}\omega_N^{p-1}\Bigg(2r^{q^{+}-2}\iint_{\cQ}G\Big(\frac{| v(x)- v(y)|}{|x-y|^s}\Big)\frac{dxdy}{|x-y|^{N}}-(p-2)\Bigg)\\
&\leq2pr^{2p}\omega_N^{p-1}(r^{q^{+}-2}\Lambda_0-(p-2)).
\end{align*}
We now wish to show that $h''(0)<0$. To this end, it is enough to show that there exists $r>1$ such that
\begin{equation}\label{d3}
r^{q^{+}-2}\Lambda_0<(p-2)
\end{equation}
with $I_K(\gamma_{\tau,r}(1))\leq0$ so that $\gamma_{\tau,r}\in\Gamma$.

From \eqref{d2}, one can find some $r$ such that $\Big(\frac{p}{2}\Big)^{\frac{1}{p-2}}<r<\Big(\frac{p-2}{\Lambda_0}\Big)^{\frac{1}{q^{+}-2}}$ and thus \eqref{d3} follows. This implies that for such $r$, $h''(0)<0$. On the other hand, from \eqref{d}
\begin{align}\label{d4}
\nonumber I_K(\gamma_{\tau,r}(1))&\leq r^{q^{+}}\tau^{2}\iint_{\cQ}G\Bigg(\frac{| v(x)- v(y)|}{|x-y|^s}\Bigg)\frac{dxdy}{|x-y|^{N}}+\frac{r^2}{2}\int_{B_1}|((1+\tau v))(x)|^2\ dx\\
\nonumber&~~~~~~~~~~~~~~~~~~~~~~~~-\frac{r^p}{p}\int_{B_1}|((1+\tau v))(x)|^p\ dx\\
&\leq r^2\Big(\frac{\tau^2}{2}(r^{q^{+}-2}\Lambda_0+1)+\frac{\omega_N}{2}\Big)-\frac{r^p}{p}\int_{B_1}|((1+\tau v))(x)|^p\ dx.
\end{align}
Now, H\"{o}lder inequality with exponents $p$ and $p'=\frac{p}{p-1}$ yields
\begin{equation*}
\omega_N=\int_{B_1}(1+\tau v)\ dx\leq\omega_N^{\frac{1}{p'}}\Big(\int_{B_1}|1+\tau v|^p\ dx\Big)^{\frac{1}{p}}
\end{equation*}
that is
\begin{equation}\label{d5}
\omega_N\leq\int_{B_1}|1+\tau v|^p\ dx.
\end{equation}
From \eqref{d5} and \eqref{d4}, it follows that
\begin{align}\label{d6}
\nonumber I_K(\gamma_{\tau,r}(1))&\leq r^2\Big(\frac{\tau^2}{2}(r^{q^{+}-2}\Lambda_0+1)+\frac{\omega_N}{2}\Big)-\frac{r^p}{p}\omega_N\\
&\leq r^2\Big(\frac{\tau^2}{2}(p-1)-\Big(\frac{r^{p-2}}{p}-\frac{1}{2}\Big)\omega_N\Big)
\end{align}
where in the latter, we have used \eqref{d3}. Notice that $\frac{r^{p-2}}{p}-\frac{1}{2}>0$. Now, choosing $\tau$ sufficiently small so that
\begin{equation*}
\tau\leq\sqrt{\frac{2\Big(\frac{r^{p-2}}{p}-\frac{1}{2}\Big)}{p-1}\omega_N},
\end{equation*}
we obtain that the right-hand side of \eqref{d6} is non-positive, that is, $I_K(\gamma_{\tau,r}(1))\leq0$. In particular, $\gamma_{\tau,r}\in\Gamma$.  The proof is therefore finished. 
\end{proof}

\begin{remark}\label{rmk2}
	We would like to mention that all the results stated in this paper hold true for a more general operators of the form 
	\begin{equation*}
	\cL_{g,s}u(x):=P.V.\int_{\R^N}g\Bigg(\frac{|u(x)-u(y)|}{k(x,y)}\Bigg)\frac{u(x)-u(y)}{|u(x)-u(y)|}\frac{dy}{k(x,y)|x-y|^{N}},
	\end{equation*}
	where $k:\R^N\times\R^N\to[0,\infty]$ is a measurable kernel that satisfies
	\begin{itemize}
		\item [$(1)$] Symmetry: $k(x,y)=k(y,x)$~~~for all~$x,y\in\R^N$.
		
		\item [$(2)$] Translation invariance: $k(x+z,y+z)=k(x,y)$~~~for all~$x,y,z\in\R^N$.
		
		\item [$(3)$] Growth condition: $\Lambda^{-1} |x-y|^s\leq k(x,y)\leq\Lambda |x-y|^s$~~for all~$x,y\in\R^N$ and for some $\Lambda\geq1$. 
	\end{itemize}
Note that when $k(x,y)=|x-y|^s$, we recover the fractional $g$-Laplacian $(-\Delta_g)^s$ defined in \eqref{non-homogeneous-fractional-laplacian}. 
\end{remark}

\section*{Data availability statement}
Data sharing not applicable to this article as no datasets were generated or analyzed during the current study.

\section*{Declaration of competing interest}

The author has no competing interests to declare that are relevant to the content of this article.\\
~

\textbf{Acknowledgements:}  The author was supported by Fields Institute. Part of this reseacrh was done when the author visited the Research Group in Partial Differential Equations and its Applications of the University of Douala-Cameroon, led by Dr. Ignace Aristide Minlend. I thank the University for their hospitality.


\bibliographystyle{ieeetr}

\end{document}